\newtheorem{thm}{Theorem}[section]
\newtheorem{cor}[thm]{Corollary}
\newtheorem{lem}[thm]{Lemma}
\newtheorem{prop}[thm]{Proposition}
\theoremstyle{definition}
\newtheorem{defn}[thm]{Definition}
\theoremstyle{remark}
\newtheorem{rem}[thm]{Remark}
\numberwithin{equation}{section}
\newcommand{\Z}{\mathbb Z}
\newcommand{\fix}{\mathrm{Fix}\,}
\newcommand{\rk}{\mathrm{rk}}
\newcommand{\aut}{\mathrm{Aut}}
\newcommand{\edo}{\mathrm{End}}
\newcommand{\B}{\mathcal{B}}
\begin{document}

\title{Fixed subgroups in direct products of surface groups of Euclidean type}

\author{Jianchun Wu}
\address{Department of Mathematics, Soochow University, Suzhou 215006, CHINA}
\email{wujianchun@suda.edu.cn}

\author{Enric Ventura}
\address{Departament de Matem\`atiques, Universitat Polit\`ecnica de Catalunya, CATALONIA} \email{enric.ventura@upc.edu}

\author{Qiang Zhang}
\address{School of Mathematics and Statistics, Xi'an Jiaotong University, Xi'an 710049,
CHINA} \email{zhangq.math@mail.xjtu.edu.cn}

\subjclass{20F65, 20F34, 57M07}

\keywords{fixed subgroups, intersections, free groups, surface groups, direct products,
inert, compressed}

\begin{abstract}
We give an explicit characterization of which direct products $G$ of surface groups of Euclidean type satisfy that the fixed subgroup of any automorphism (or endomorphism) of $G$ is compressed, and of which is it always inert.
\end{abstract}
\maketitle

\section{Introduction}

For a finitely generated group $G$, let $\rk(G)$ denote its rank, i.e., the minimal number of the generators of $G$. Let endomorphisms of $G$ act on the left, i.e., $g\mapsto \phi g$, and denote the monoid of endomorphisms of $G$ by $\edo(G)$, and the group of automorphisms by $\aut(G)$. For an arbitrary family $\emptyset \neq \B\subseteq \edo(G)$, the \emph{fixed subgroup} of $\B$ is
 $$
\fix \B :=\{g\in G \mid \phi g=g,\,\, \forall \phi\in \B\} =\bigcap_{\phi\in
\mathcal{B}} \fix\phi \leqslant G.
 $$

\begin{defn}
A subgroup $H\leqslant G$ is said to be:
\begin{itemize}
\item[(i)] \emph{inert in $G$} if $\rk(H\cap K)\leqslant \rk(K)$ for every (finitely generated) $K\leqslant G$;
\item[(ii)] \emph{compressed in $G$} if $\rk(H)\leqslant \rk(K)$ for every (finitely generated) $K$ with $H\leqslant K\leqslant G$;
\item[(iii)] \emph{c-bounded in $G$} if $\rk(H) \leqslant c \cdot \rk(G)$; for simplicity, when $H$ is 1-bounded in $G$, we just say it is \emph{bounded in $G$}.
\end{itemize}
\end{defn}

In~\cite{BH}, Bestvina--Handel solved the famous Scott's conjecture that the fixed subgroup of an automorphism of a free group $F$ is bounded in $F$:

\begin{thm}[Bestvina--Handel, \cite{BH}]\label{BH}
For any automorphism $\phi$ of a free group $F$, $\rk(\fix\phi) \leqslant \rk(F)$.
\end{thm}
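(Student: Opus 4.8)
The plan is to abandon the purely algebraic viewpoint and pass to a topological model, then invoke graph-map technology in the spirit of train tracks. First I would realize $F$ as $\pi_1(\Gamma,v)$ for a finite connected graph $\Gamma$ (for instance a rose with $\rk(F)$ petals) and represent $\phi$ by a based homotopy equivalence $f\colon(\Gamma,v)\to(\Gamma,v)$ inducing $\phi$ on the fundamental group. Under this dictionary an element $w\in\fix\phi$ corresponds to a reduced loop $\gamma$ at $v$ with $f(\gamma)$ homotopic rel basepoint to $\gamma$, so the task becomes understanding which loops $f$ fixes up to homotopy, and how many of them are independent.

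The crucial step is to replace the arbitrary representative $f$ by a well-behaved one. I would run the Bestvina--Handel normalization (subdividing, folding, collapsing invariant forests, and tightening) to arrange that $f$ is a train track map, or, in general, a relative train track map with respect to a filtration by $f$-invariant subgraphs. The payoff is dynamical control: images of legal edge-paths do not backtrack under iteration, and in all cases one has Cooper's bounded cancellation lemma supplying a uniform constant that bounds the cancellation created when $f(\text{reduced path})$ is tightened. This control is precisely what tames the set of fixed loops.

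With the train track structure in hand I would describe $\fix\phi$ combinatorially in terms of the fixed points of $f$, the \emph{fixed directions} (germs of edges $d$ with $Df(d)=d$), and the finitely many indivisible Nielsen paths, i.e.\ minimal nontrivial paths $\rho$ with $f(\rho)$ homotopic to $\rho$ rel endpoints. One should show that every fixed loop decomposes into such pieces, so that $\fix\phi$ is finitely generated (Gersten's theorem, which I would take as available) and carried by a finite graph of fixed points joined by Nielsen paths. A warning here is that a naive immersion bound is useless: a finitely generated subgroup of $F$ can have rank vastly exceeding $\rk(F)$, so one must exploit the extra rigidity that a fixed subgroup traverses only fixed directions and turns only along fixed legal turns.

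Finally I would extract the inequality from an Euler-characteristic/index count rather than from any immersion estimate. Concretely, I would attach to each fixed point of $f$ (and each fixed end of the boundary) a local index recording its fixed-direction data, sum these indices globally, and prove that the total is at most $\rk(F)-1$; since the contribution of $\fix\phi$ to this sum is at least $\rk(\fix\phi)-1$, the bound $\rk(\fix\phi)\leqslant\rk(F)$ follows. The principal obstacle throughout is cancellation: for a generic representative, iteration of $f$ produces uncontrolled backtracking and the fixed loops admit no usable normal form, so installing the (relative) train track structure and establishing bounded cancellation is the technical heart. The relative case is where most of the difficulty concentrates, since one must induct through the strata of the filtration and handle the non-expanding (polynomially growing) strata, together with the finiteness of indivisible Nielsen paths, with particular care.
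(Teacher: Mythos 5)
This theorem is not proved in the paper at all; it is quoted as the known result of Bestvina--Handel \cite{BH}, so the only meaningful comparison is with that original proof. Your outline follows essentially the same route as \cite{BH} --- realizing $\phi$ on a finite graph, normalizing to a (relative) train track map, controlling cancellation via Cooper's lemma, decomposing fixed loops into fixed directions and indivisible Nielsen paths, and concluding by an index count bounded by $\rk(F)-1$ --- and is a faithful high-level roadmap of that argument, with the understanding that the steps you defer (existence of relative train track maps, finiteness of Nielsen paths, and the index inequality itself) constitute the substantial technical content of the cited paper rather than routine verifications.
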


The notions of inertia and compression were introduced in Dicks--Ventura~\cite{DV}, where the authors proved the following extension of Bestvina--Handel's result.

\begin{thm}[Dicks--Ventura, \cite{DV}]\label{inj fixed subgp of free gp inert}
Let $F$ be a finitely generated free group, and let $\B$ be a family of injective endomorphisms of $F$. Then, $\fix\B$ is inert in $F$.
\end{thm}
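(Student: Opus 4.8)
The plan is to reduce the statement, in stages, to the single‑automorphism bound of Theorem~\ref{BH}, and then to promote that \emph{global} rank bound to a \emph{relative} one by running a rank count on folded graphs into which the arbitrary subgroup $K$ has been built. First I would dispose of the passage from an infinite family to a finite one. As a preliminary I would extend Theorem~\ref{BH} from automorphisms to a single injective endomorphism, obtaining that each $\fix\phi$ is finitely generated with $\rk(\fix\phi)\leqslant\rk F$; this provides the uniform rank bound needed to control the directed system $\{\fix\B_0 : \B_0\subseteq\B \text{ finite}\}$. Using a descending chain condition for fixed subgroups of injective endomorphisms, I would show that $\fix\B=\fix\B_0$ for some finite $\B_0\subseteq\B$, so that the whole problem is reduced to finitely many injective endomorphisms acting simultaneously.

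Next I would set up the graphical machinery. Represent $F$ as $\pi_1(R)$ for a rose $R$, and realize each $\phi\in\B_0$ by a train‑track representative $f_\phi\colon\Gamma\to\Gamma$ on a graph $\Gamma\simeq R$, so that the Bestvina--Handel analysis of fixed points and fixed Nielsen paths applies; the fact that $\im\phi$ has full rank is what lets the injective‑endomorphism case be handled by the same train‑track technology that proves Theorem~\ref{BH} for automorphisms. At this stage the global bound $\rk(\fix\B_0)\leqslant\rk F$ is in hand, but it is \emph{too weak}: one cannot obtain inertness from it together with a Hanna Neumann estimate on the fiber product of $\fix\B_0$ with $K$, since that estimate is quadratic in the ranks. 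The genuine input must be that $\fix\B_0$ is a \emph{fixed} subgroup, so the count has to be redone relative to $K$ using the fixed‑point structure rather than treating $\fix\B_0$ as an abstract subgroup.

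Concretely, let $\iota\colon\Gamma_K\to R$ be the Stallings core graph of $K$, an immersion with $\rk K = 1-\chi(\Gamma_K)$. An element $g\in K=\pi_1(\Gamma_K)$ lies in $\fix\B_0$ precisely when the loop $\iota_*(g)$ is, for every $\phi\in\B_0$, freely homotopic rel basepoint to its $f_\phi$‑image. I would encode this by a pullback graph $\Gamma_K\times_R(\cdot)$ that simultaneously folds $\Gamma_K$ against the fixed‑Nielsen‑path data of each $f_\phi$; its vertices and edges record the pairs at which a path of $\Gamma_K$ is carried into a common fixed path. Then $\fix\B_0\cap K$ is computed by the core of this pullback, and an Euler‑characteristic count should bound $\rk(\fix\B_0\cap K)\leqslant -\chi(\Gamma_K)+1=\rk K$, the simultaneous fixed‑point constraints only cutting down the available independent cycles of $\Gamma_K$.

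The main obstacle is exactly this last transfer. Theorem~\ref{BH} runs the fixed‑point count on all of $\Gamma$; to get inertness one must re‑run the Bestvina--Handel bookkeeping on the pullback that carries $K$, and check that the bounded‑cancellation estimates and the classification of fixed Nielsen paths survive the folding of $\Gamma_K$ into the picture and remain compatible with taking several maps at once. I expect the two delicate points to be (i) the descending chain condition that collapses the infinite family to a finite $\B_0$, and (ii) verifying that the relative fixed‑point data are matched correctly by the fiber product, so that the Euler characteristic genuinely bounds $\rk(\fix\B_0\cap K)$ from above by $\rk K$ for \emph{every} finitely generated $K$.
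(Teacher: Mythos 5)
First, a point of order: the paper does not prove this statement at all --- it is quoted verbatim from the Dicks--Ventura monograph \cite{DV}, whose entire purpose is to establish it. So the comparison can only be against that proof. Your outline does echo, at a very coarse level, the actual strategy of \cite{DV} (a graph-theoretic, Bestvina--Handel-style argument run \emph{relative} to the test subgroup $K$, together with a reduction of infinite families to finite ones), and your observation that the absolute bound of Theorem~\ref{BH} plus Hanna--Neumann-type counts cannot yield inertia is correct. But the two points you yourself flag as ``delicate'' are not details to be checked at the end: they \emph{are} the theorem, and the proposal contains no argument for either.

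Concretely: (i) the ``descending chain condition for fixed subgroups of injective endomorphisms'' is not an available preliminary. The uniform bound $\rk(\fix\phi)\leqslant\rk F$ does not control the directed system: descending chains of subgroups of bounded rank need not stabilize, and even granting inertia for finite subfamilies, the natural argument (choose a finite $\B_0\subseteq\B$ minimizing $\rk(\fix\B_0)$ and argue that adding any $\psi\in\B$ cannot shrink it) fails, because a fixed subgroup can properly contain another fixed subgroup of the \emph{same} rank: for $\phi\colon a\mapsto a,\ b\mapsto ba$ one has $\fix\phi\supseteq\langle a, bab^{-1}\rangle$, so $\fix\phi$ is a proper subgroup of $F_2=\fix(\mathrm{id})$ of rank exactly $2$. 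So stabilization needs genuinely more structure than rank bookkeeping, and its proof in the literature comes \emph{after}, not before, the finite-family theorem. (ii) The heart of your sketch --- a pullback of the Stallings graph $\Gamma_K$ against ``fixed-Nielsen-path data'' whose core computes $\fix\B_0\cap K$ and whose Euler characteristic yields $\rk(\fix\B_0\cap K)\leqslant\rk K$ --- is not a defined construction, and the sentence asserting that the fixed-point constraints ``only cut down the available independent cycles'' is a restatement of the conclusion, not a proof. The obstruction is that $\fix\B_0$ is not carried by any immersion over the rose in a way compatible with fiber products: it is encoded by Nielsen paths of the train-track representatives, which can be long and overlap, which is exactly why no Stallings-type count applies. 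What Dicks--Ventura actually do is rebuild the Bestvina--Handel machinery from scratch in a relative form strong enough to output inertia directly, rather than deduce inertia from the absolute theorem; that reconstruction occupies their monograph and is precisely the content your proposal leaves blank.
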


It is obvious that inertia implies compression, and compression implies boundedness; also, a finite intersection of inert subgroups in $G$ is again inert in $G$. There has been several papers in the literature studying these properties for fixed subgroups of automorphisms and endomorphisms of free groups, surface groups, and finite direct products of them. We list some as follows.

\begin{thm}[Bergman, \cite{B}]
Let $F$ be a finitely generated free group, and let $\B \subseteq \edo(F)$. Then, $\fix\B$ is bounded in $F$.
\end{thm}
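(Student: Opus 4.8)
The plan is to split the argument according to whether $\B$ consists of a single endomorphism or is an arbitrary family, because---unlike inertia---boundedness is \emph{not} preserved under intersection, so the family case cannot simply be reduced to the single-endomorphism case via $\fix\B=\bigcap_{\phi\in\B}\fix\phi$.

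\textbf{A single endomorphism.} First I would prove $\rk(\fix\phi)\le n:=\rk(F)$ for one $\phi\in\edo(F)$ by reducing to the injective case covered by Theorem~\ref{inj fixed subgp of free gp inert}. The ranks $\rk(\phi^{k}F)$ form a non-increasing sequence in $\{0,1,\dots,n\}$, hence stabilize: there is an $N$ with $\rk(\phi^{k}F)=\rk(\phi^{N}F)=:r$ for all $k\ge N$. Set $F':=\phi^{N}F$, free of rank $r\le n$. Then $\phi|_{F'}$ maps $F'$ into itself with image $\phi^{N+1}F\le F'$ of the same rank $r$, so $\phi|_{F'}$ is a surjection between free groups of equal finite rank and therefore, by Hopficity, an \emph{injective} endomorphism of $F'$. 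Since every fixed point satisfies $g=\phi^{k}g\in\phi^{k}F$ for all $k$, we get $\fix\phi=\fix(\phi|_{F'})\le F'$, and Theorem~\ref{inj fixed subgp of free gp inert} applied inside $F'$ yields $\rk(\fix\phi)\le\rk(F')=r\le n$.

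\textbf{Arbitrary families.} The naive attempt to carry this over---passing to a common ``eventual image'' $E=\bigcap_{\psi}\psi F$ over the monoid generated by $\B$ on which each $\phi$ restricts injectively---breaks down, since a generic $\phi\in\B$ need not preserve $E$ and such an infinite intersection can have rank exceeding $n$. Instead I would linearize using the augmentation ideal. Recall $I_F=\ker(\Z F\to\Z)$ is a free left $\Z F$-module of rank $n$ on $\{x_i-1\}$, and $g\mapsto g-1$ embeds $F$ into $I_F$. Each $\phi\in\edo(F)$ induces a ring endomorphism of $\Z F$ preserving $I_F$, and $\phi g=g$ is equivalent to $\phi(g-1)=g-1$; hence $\set{\,g-1 : g\in\fix\B\,}$ lies in the common fixed set of $\B$ acting on $I_F$. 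Choosing an embedding of $\Z F$ into a division ring $D$ and extending scalars, $D\otimes_{\Z F}I_F\cong D^{n}$, so it suffices to bound by $n$ the $D$-dimension of the $D$-span of the images of $\fix\B$.

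The step I expect to be the genuine obstacle is showing that a free basis $h_1,\dots,h_\rho$ of $\fix\B$ is sent to $D$-\emph{linearly independent} vectors, which would give $\rho=\rk(\fix\B)\le\dim_D D^{n}=n$. This independence is \emph{false} for arbitrary finitely generated $H\le F$---finite-index subgroups have rank far exceeding $n$, so their elements $h-1$ must be $\Z F$-dependent---so the argument has to exploit the fixed-point equations essentially, not merely the ambient dimension. Controlling the \emph{supports} of the associated derivations and proving that the fixed-point constraints force the required independence is the heart of the matter, and is exactly where freeness of $F$ (rather than an arbitrary group) is used. Notably, no injectivity of the endomorphisms enters here, which is why the conclusion holds for all of $\edo(F)$ and for arbitrary families $\B$ simultaneously.
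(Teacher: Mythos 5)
A preliminary remark: the paper states this result as a quotation from Bergman's article \cite{B} and contains no proof of it, so there is no in-paper argument to compare against; your proposal has to stand on its own.

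Your single-endomorphism argument is correct, and it is the classical stable-image (Imrich--Turner) reduction: the ranks of the iterated images $\phi^k F$ are non-increasing and hence stabilize; $\phi$ restricts to an injective endomorphism of $F'=\phi^N F$ by Hopficity of finitely generated free groups; every fixed point lies in $F'$; and Theorem~\ref{inj fixed subgp of free gp inert} (compression inside $F'$ would already suffice) gives $\rk(\fix\phi)\leqslant \rk(F')\leqslant \rk(F)$. You are also right that this does not extend to families by naive intersection, since boundedness, unlike inertia, is not preserved under intersections, and that a common ``eventual image'' for the monoid generated by $\B$ need not exist with the required invariance properties.

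The problem is that the theorem is about an \emph{arbitrary} family $\B\subseteq \edo(F)$, and for that case what you have written is a plan, not a proof. Your linearization setup is accurate (the augmentation ideal $I_F$ is a free $\Z F$-module of rank $n$; $\phi g=g$ is equivalent to fixing $g-1$; after embedding $\Z F$ in a division ring $D$ the ambient space becomes $D^n$), and your diagnosis is also accurate: the desired bound $\rk(\fix\B)\leqslant n$ would follow from the $D$-linear independence of $h_1-1,\dots,h_\rho-1$ for a free basis of $\fix\B$, and this independence cannot come from dimension counting alone, because it fails for general finitely generated subgroups (e.g.\ finite-index ones). But you then explicitly stop at exactly that point, labelling the independence statement ``the genuine obstacle'' and ``the heart of the matter.'' That step is not a technical detail to be deferred: it \emph{is} Bergman's theorem, and proving it is where his machinery of supports of derivations is developed and applied. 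Since the family case is the only content of the statement not already recoverable from Theorems~\ref{BH} and~\ref{inj fixed subgp of free gp inert} by standard reductions --- and is precisely why the result is attributed to Bergman --- the proposal as written has a genuine, unfilled gap.
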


\begin{thm}[Martino--Ventura, \cite{MV}]\label{compression of fixed subgp in free gp}
Let $F$ be a finitely generated free group, and let $\B\subseteq \edo(F)$. Then, $\fix \B$ is compressed in $F$.
\end{thm}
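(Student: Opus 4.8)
The plan is to reduce the statement to the Dicks--Ventura theorem (Theorem~\ref{inj fixed subgp of free gp inert}) for families of injective endomorphisms. The key point is that injectivity is the only obstruction: once we are dealing with injective endomorphisms that theorem already yields inertia (hence compression), and inertia is preserved under intersections, so an arbitrary family causes no extra trouble on that side. I would first treat a single endomorphism and then pass to families.

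For a single $\phi\in\edo(F)$, consider the descending chain of images $F\supseteq \phi F\supseteq \phi^2F\supseteq\cdots$. Each $\phi^nF$ is finitely generated (being a homomorphic image of the finitely generated $F$) and free, and the ranks $\rk(\phi^nF)$ are non-increasing because a surjection cannot raise the rank. Hence they stabilize: there is $N$ with $\rk(\phi^{n}F)=\rk(\phi^{N}F)$ for all $n\geq N$. For such $n$ the map $\phi\colon \phi^nF\to\phi^{n+1}F$ is a surjection between finitely generated free groups of equal rank, so by the Hopf property it is an isomorphism. Therefore $\phi$ restricts to an injective endomorphism of the finitely generated free group $W:=\phi^{N}F$. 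Moreover every fixed element satisfies $g=\phi^{N}g\in\phi^NF$, so $\fix\phi\leqslant W$ and $\fix\phi=\fix(\phi|_{W})$. Applying Theorem~\ref{inj fixed subgp of free gp inert} inside $W$ shows that $\fix\phi$ is inert, in particular compressed, in $W$.

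To transfer compression from $W$ back to $F$, I would use the map $\phi^{N}$ itself as a rank-nonincreasing ``projection'' onto $W$ that fixes $\fix\phi$ pointwise. Indeed, let $K$ be finitely generated with $\fix\phi\leqslant K\leqslant F$. Then $\phi^{N}(K)\leqslant \phi^{N}F=W$, and since $\phi^{N}$ fixes $\fix\phi$ pointwise we have $\fix\phi=\phi^{N}(\fix\phi)\leqslant \phi^{N}(K)$; also $\rk(\phi^{N}(K))\leqslant\rk(K)$ because $\phi^{N}|_{K}$ surjects onto its image. Compression of $\fix\phi$ in $W$ now yields $\rk(\fix\phi)\leqslant\rk(\phi^{N}(K))\leqslant\rk(K)$, as desired.

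For an arbitrary family $\B$, the plan is to run the same argument simultaneously for all members. Every element of the submonoid of $\edo(F)$ generated by $\B$ fixes $\fix\B$ pointwise, so I would look for a single finitely generated free subgroup $W\leqslant F$ that is invariant under every $\phi\in\B$, on which every $\phi$ restricts injectively, and which is reached by a composite $s$ of members of $\B$ with $sF=W$; then $\fix\B=\fix(\B|_{W})$, Theorem~\ref{inj fixed subgp of free gp inert} gives that $\fix\B$ is inert, hence compressed, in $W$, and $s$ plays the role of $\phi^{N}$ in the transfer step above. The natural candidate for $W$ is the stable term of the descending chain $V_{n}=\langle\, sF : s \text{ a length-}n\text{ composite of members of }\B\,\rangle$, which is $\phi$-invariant for every $\phi\in\B$. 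The main obstacle is precisely here: one must show that this chain stabilizes at a finitely generated subgroup on which all the maps become injective. This is where the real work lies, since images of endomorphisms need not be inert in $F$ and an infinite family has no single controlling composite; I expect to handle it by a rank/Euler-characteristic stabilization argument for finite subfamilies, together with a finiteness reduction expressing $\fix\B$ as the fixed subgroup of finitely many of its members.
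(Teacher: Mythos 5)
Your first half is correct and complete: for a single $\phi\in\edo(F)$, the rank of $\phi^nF$ stabilizes, Hopficity makes $\phi$ injective on $W=\phi^NF$, the fixed subgroup satisfies $\fix\phi=\fix(\phi|_W)\leqslant W$, Theorem~\ref{inj fixed subgp of free gp inert} gives inertia (hence compression) of $\fix\phi$ in $W$, and your transfer step via $\phi^N$ (which is exactly Lemma~\ref{compr} of this paper, applied with $\varphi=\phi^N$ and $H=\fix\phi$) pushes compression back up to $F$. Note that the paper under review does not prove this theorem at all --- it quotes it from~\cite{MV} --- so the comparison is really with~\cite{MV}, where the single-endomorphism case is likewise a short reduction in this spirit; the content of that paper is precisely the case of arbitrary families, which is where your proposal stops.

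The gap in your family argument is not a technicality: the structure your plan requires need not exist, even for a family of two endomorphisms. Take $F=\langle a,b\rangle$, $s_0\colon a\mapsto a,\ b\mapsto 1$, and $s\colon a\mapsto ab,\ b\mapsto 1$. As maps one checks $s_0s=s_0s_0=s_0$ and $ss_0=ss=s$, so the generated monoid is $\{\mathrm{id},s_0,s\}$ and every nontrivial composite has image $\langle a\rangle$ or $\langle ab\rangle$; neither of these is invariant under the other generator ($s(a)=ab\notin\langle a\rangle$, $s_0(ab)=a\notin\langle ab\rangle$), so no composite image $W=sF$ is $\B$-invariant. Your chain $V_n=\langle\, tF: t \text{ a length-}n\text{ composite}\,\rangle$ equals $\langle a,ab\rangle=F$ for every $n\geqslant 1$, so its stable term is $F$ itself, on which $s_0$ is \emph{not} injective. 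Worse, restriction to a stabilized image genuinely loses information: every composite $s_0t$ restricts to the identity on $s_0F=\langle a\rangle$, so the induced family of injective endomorphisms of $s_0F$ fixes all of $\langle a\rangle$, whereas $\fix\{s_0,s\}=\fix s_0\cap\fix s=1$. Thus the fixed subgroup of the induced injective family on an image can be strictly larger than $\fix\B$, and no ``controlling composite'' argument can recover $\fix\B$ from it. Finally, the finiteness reduction you invoke (that $\fix\B=\fix\B_0$ for some finite $\B_0\subseteq\B$) is not an elementary fact available as a black box; statements of this kind are themselves derived from the same circle of results. So your proposal proves the theorem for a single endomorphism, but the general case --- the actual substance of~\cite{MV} --- remains unproved, and the route you sketch cannot be completed as stated.
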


It is still open whether $\fix\phi$ is inert in $F$ for every $\phi\in \edo(F)$, a question asked first in~\cite[Problem~5]{DV}, and known as the \emph{inertia conjecture} (a small step into this direction can be found in~\cite{ZVW}).

\bigskip

For surface groups a lot is know too. A \emph{surface group} is the fundamental group $\pi_1(X)$ of a connected, possibly puctured, surface $X$. To fix notation, we denote $\Sigma_g$ the closed orientable surface of genus $g\geqslant 1$, and
 $$
S_g= \pi_1(\Sigma_g )=\langle a_1,b_1,\ldots,a_g,b_g \mid [a_1, b_1]\cdots [a_g, b_g]
\rangle
 $$
its fundamental group; for the non-orientable case, we denote $N\Sigma_k$ the connected sum of $k\geqslant 1$ projective planes, and
 $$
NS_k =\pi_1(N\Sigma_k)=\langle a_1,a_2,\ldots,a_k \mid a_1^2 \cdots a_k^2\rangle
 $$
its fundamental group. With one or more punctures, we get the finitely generated free groups $F_r$. Straightforward calculations show that $NS_2 \simeq \langle a,b \mid bab^{-1}a\rangle$; we shall work with this presentation. 

Nielsen, Jaco--Shalen, and Zieschang gave the following results.

\begin{thm}[Nielsen, \cite{N1, N2}; Jaco--Shalen, \cite{JS}]
Let $G$ be a surface group. For any $\phi\in \aut(G)$, $\fix\phi$ is bounded in $G$.
\end{thm}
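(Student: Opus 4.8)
The plan is to prove the bound $\rk(\fix\phi)\leqslant\rk(G)$ by cases according to the topology of the underlying surface, reducing to Theorem~\ref{BH} in the free case and arguing geometrically in the closed case. First I would note that if the surface has at least one puncture then $G$ is a finitely generated free group, and the desired inequality is precisely Scott's conjecture, i.e.\ Theorem~\ref{BH}. For closed surfaces I would separate the \emph{Euclidean} ones — the torus, with $G\cong\Z^2$, and the Klein bottle, with $G=NS_2\simeq\langle a,b\mid bab^{-1}a\rangle$ — from the \emph{hyperbolic} ones ($S_g$ with $g\geqslant2$ and $NS_k$ with $k\geqslant3$); the sphere and projective plane give finite $G$, where the bound is trivial. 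The Euclidean cases I would dispose of by direct computation: for $G=\Z^2$ an automorphism is a matrix $A\in\mathrm{GL}_2(\Z)$ and $\fix\phi=\ker(A-I)$ is a pure sublattice, so $\rk(\fix\phi)\leqslant2=\rk(G)$; for the (virtually abelian, rank-$2$) Klein bottle group I would enumerate the rather rigid group $\aut(G)$ and check that $\fix\phi$ is always cyclic or all of $G$.

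The heart is the hyperbolic case, where $G$ is torsion-free, word-hyperbolic, and has unique roots (centralizers of nontrivial elements being infinite cyclic). Here I would split on the index of $H:=\fix\phi$ in $G$. Suppose first $[G:H]<\infty$. Then every $g\in G$ has a power $g^m\in H$, so $(\phi g)^m=\phi(g^m)=g^m$, and uniqueness of roots forces $\phi g=g$; thus $\phi=\mathrm{id}$ and $\fix\phi=G$ has rank exactly $\rk(G)$. This collapse is what must happen: were $H$ a proper finite-index subgroup, say of index $n\geqslant2$ in $S_g$, an Euler-characteristic count would give $\rk(H)=2g+2(n-1)(g-1)>2g=\rk(G)$, violating the bound. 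So the only finite-index fixed subgroup is $G$ itself.

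Now suppose $[G:H]=\infty$. Then the cover of the surface corresponding to $H$ is non-compact, whence $H$ is free, and it remains to bound its rank. I would realize $\phi$ by a homeomorphism $f$ via Dehn–Nielsen–Baer and choose the lift $\tilde f$ of $f$ to $\mathbb{H}^2$ with $\tilde f\,g=\phi(g)\,\tilde f$ for every deck transformation $g$, so that
\[
\fix\phi=\cen_G(\tilde f)=\{\,g\in G : g\tilde f=\tilde f g\,\}
\]
is exactly the group of deck transformations preserving the fixed-point set $\fix(\tilde f)\subseteq\mathbb{H}^2\cup\partial\mathbb{H}^2$. Nielsen's analysis of the induced action on the circle at infinity — the attracting/repelling structure of the finitely many boundary fixed points of $\tilde f$ — then yields that $H$ is finitely generated, and an index/Euler-characteristic count on the corresponding fixed subsurface gives $\rk(H)\leqslant\rk(G)$.

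The genuinely hard step is this last, infinite-index hyperbolic case: establishing both the finite generation of $\fix\phi$ and the \emph{sharp} constant $\rk(G)$ — boundedness, not merely some bound — from the geometry of the fixed-point set on $\partial\mathbb{H}^2$. This is exactly the content of the Nielsen and Jaco–Shalen results being invoked, and is where all the real work lies; by contrast the finite-index reduction and the Euclidean computations are routine.
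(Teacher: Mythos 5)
The paper does not actually prove this statement: it is quoted as a classical theorem with attribution to Nielsen and Jaco--Shalen, so the only comparison available is between your attempt and the cited literature. Your reduction scheme is reasonable and the peripheral cases are handled correctly: the punctured (free) case does follow from Theorem~\ref{BH}; the sphere, projective plane and torus are trivial computations; and your finite-index argument in the hyperbolic case is correct and clean (torsion-free hyperbolic surface groups have unique roots, so a finite-index fixed subgroup forces $\phi=\mathrm{id}$). One detail is wrong, though: for the Klein bottle the fixed subgroup of an automorphism is \emph{not} always cyclic or all of $G$. For example, $a\mapsto a$, $b\mapsto ba$ defines an automorphism of $NS_2=\langle a,b\mid bab^{-1}a\rangle$ with fixed subgroup $\langle a,b^2\rangle\cong\Z^2$, of index $2$; your claimed enumeration outcome was evidently not checked. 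The bound still holds, but for the easier reason used in Proposition~\ref{euc2} of the paper: every subgroup of $NS_2$ is isomorphic to $1$, $\Z$, $\Z^2$ or $NS_2$, hence has rank at most $2$, so \emph{any} subgroup is bounded.

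The genuine gap is exactly where you yourself locate it: the infinite-index case for closed hyperbolic surface groups. There your text describes what Nielsen and Jaco--Shalen prove rather than proving it: the assertion that ``Nielsen's analysis of the induced action on the circle at infinity yields that $H$ is finitely generated, and an index/Euler-characteristic count gives $\rk(H)\leqslant\rk(G)$'' is precisely the statement to be established, deferred back to the very references the theorem is attributed to. Nothing in your sketch produces the finite generation (the boundary fixed-point set of a lift $\tilde f$ need not be finite, and controlling it requires the Nielsen/Thurston machinery, including the treatment of reducible maps and maps restricting to the identity on subsurfaces), nor the sharp constant $\rk(G)$ as opposed to some bound. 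So as a self-contained argument the proposal is incomplete at its core; as an organization of how the classical theorem splits into easy cases plus Bestvina--Handel plus the hard Nielsen/Jaco--Shalen case, it is accurate apart from the Klein bottle slip.
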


\begin{thm}[Zieschang, \cite{Z}]
Let $G$ be a closed surface group with $\chi(G)<0$. For any non-epimorphic endomorphism $\phi\in \edo(G)$, $\fix\phi$ is $\frac{1}{2}$-bounded in $G$.
\end{thm}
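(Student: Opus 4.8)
The plan is to reduce the statement to the free-group case covered by Bergman's theorem, exploiting the fact that a non-surjective endomorphism of a closed surface group has free image of controlled rank. First I would record where the fixed points live: if $\phi g = g$ then $g = \phi g \in H := \im\phi$, so $\fix\phi \leqslant H$. Moreover $\phi(H) = \phi^2(G) \subseteq \phi(G) = H$, so $\phi$ restricts to an endomorphism $\phi|_H \in \edo(H)$, and since $\fix\phi \leqslant H$ we get the identification $\fix\phi = \fix(\phi|_H)$.

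Next I would show that $H$ is free. As the image of $\phi$, it is generated by the $\phi$-images of a generating set of $G$, so $\rk(H) \leqslant \rk(G)$. On the other hand, a proper finite-index subgroup $K < G$ is again a closed surface group with $\chi(K) = [G:K]\,\chi(G)$, so that $\rk(K) = 2 - \chi(K) > 2 - \chi(G) = \rk(G)$, using $\chi(G) < 0$. Hence $H$ cannot have finite index unless $H = G$, which is excluded because $\phi$ is non-epimorphic; so $H$ has infinite index, and a finitely generated infinite-index subgroup of a surface group is free.

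Now Bergman's theorem \cite{B} applies to the free group $H$ and the endomorphism $\phi|_H$, yielding $\rk(\fix\phi) = \rk(\fix(\phi|_H)) \leqslant \rk(H)$. It therefore remains to establish the key inequality $\rk(H) \leqslant \tfrac12\rk(G)$, and this is where I expect the genuine work to lie. The observation is that $\phi$ exhibits $H$ as a free quotient of $G$, so the question reduces to: what is the maximal rank of a free quotient of a closed surface group with $\chi(G) < 0$? For a surjection $\rho\colon G \twoheadrightarrow F_r$ the induced map $\rho^*$ on first cohomology is injective, and since $H^2(F_r) = 0$ the subspace $\im\rho^*$ is isotropic for the cup-product pairing on $H^1(G)$. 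In the orientable case $G = S_g$ this pairing is the symplectic intersection form on $H^1(S_g;\Q) \cong \Q^{2g}$, whose isotropic subspaces have dimension at most $g$, giving $r \leqslant g = \tfrac12\rk(G)$. In the non-orientable case $G = NS_k$ one has $H^2(NS_k;\Q) = 0$, so the argument must instead be run with $\Z/2$ coefficients, where the mod-$2$ intersection form on $H^1(NS_k;\Z/2) \cong (\Z/2)^k$ is non-degenerate and its maximal totally isotropic subspaces have dimension $\lfloor k/2 \rfloor$, again yielding $r \leqslant \tfrac12\rk(G)$.

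Combining the pieces gives $\rk(\fix\phi) \leqslant \rk(H) \leqslant \tfrac12\rk(G)$, i.e.\ $\fix\phi$ is $\tfrac12$-bounded. The main obstacle is the cohomological isotropy bound on free quotients, together with the careful bookkeeping of coefficients (rational in the orientable case versus mod $2$ in the non-orientable one); by contrast the reduction to a free image and the appeal to Bergman's theorem are routine.
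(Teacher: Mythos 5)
The paper never proves this statement: it is quoted in the introduction as background, attributed to Zieschang~\cite{Z} (with alternative proofs referenced in~\cite{JWZ}), and its proof is never used later. So there is no in-paper argument to compare yours against; I can only assess your proof on its own merits, and it is correct. The reduction is sound: $\fix\phi\leqslant H:=\im\phi$, $\phi(H)=\phi^2(G)\subseteq\phi(G)=H$, and $\fix\phi=\fix(\phi_{|H})$. Your rank argument ruling out proper finite index is valid, since for closed surface groups $\rk=2-\chi$ (abelianization gives the lower bound) and $\chi$ is multiplicative in finite covers, so with $\chi(G)<0$ every proper finite-index subgroup has rank strictly larger than $\rk(G)\geqslant\rk(H)$; and infinite-index subgroups of closed surface groups are indeed free (they are fundamental groups of non-compact surfaces), so Bergman's theorem applies to $\phi_{|H}\in\edo(H)$. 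The cohomological bound on free quotients is also right, including its one delicate point: for $NS_k$ the rational pairing carries no information ($H^2(NS_k;\Q)=0$), and over $\Z/2$ one must worry about cup squares, but the image of $\rho^*$ is still \emph{totally} isotropic because $H^2(F_r;\Z/2)=0$ kills squares as well; then non-degeneracy of the mod-$2$ pairing (Poincar\'e duality with $\Z/2$ coefficients holds for closed non-orientable surfaces) gives $W\subseteq W^{\perp}$, hence $\dim W\leqslant k-\dim W$, i.e.\ $\dim W\leqslant\lfloor k/2\rfloor$. Altogether $\rk(\fix\phi)\leqslant\rk(H)\leqslant\frac{1}{2}\rk(G)$ in both the orientable and non-orientable cases. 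For the record, this route --- the image of a non-epimorphic endomorphism is free of rank at most half the rank of $G$, followed by a fixed-subgroup bound for free groups --- is the standard modern proof of Zieschang's theorem; the original argument is geometric, but nothing in your write-up depends on it.
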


Some alternative proofs can also be found in~\cite{JWZ}. Some years ago, these two theorems were extended in~\cite{WZ} to families of endomorphisms.

\begin{thm}[Wu--Zhang, \cite{WZ}]\label{surface inert}
Let $G$ be a closed surface group with $\chi(G)<0$. For any $\B\subseteq \aut(G)$, $\fix \B$ is inert in $G$.
\end{thm}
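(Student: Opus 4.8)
The plan is to reduce the statement to the free-group inertia theorem (Theorem~\ref{inj fixed subgp of free gp inert}) by exhibiting $\fix\B$ as a fixed subgroup inside a suitable free, inert subgroup of $G$, the guiding principle being that fixed subgroups of surface automorphisms are \emph{almost always free}. Writing $H=\fix\B$, I would first record the dichotomy: either $H=G$, or $H$ is a free group of infinite index. Indeed, suppose some $\phi\in\B$ is not the identity and that $\fix\phi$ had finite index. Taking $K=\fix\phi$, this $K$ is again a closed surface group with $\chi(K)=[G:K]\,\chi(G)$, so $\rk(K)=2-[G:K]\,\chi(G)>2-\chi(G)=\rk(G)$ once $[G:K]\geqslant 2$ (here $\chi(G)<0$ is essential); this contradicts the Nielsen--Jaco--Shalen bound $\rk(\fix\phi)\leqslant\rk(G)$. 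Hence $\fix\phi$, and a fortiori $H\leqslant\fix\phi$, has infinite index in $G$, and a finitely generated subgroup of infinite index in a surface group is free. When instead every element of $\B$ is the identity, $H=G$ and $H\cap K=K$ is inert trivially; so from now on I assume $H$ is free of infinite index.

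Next I would reduce to a single automorphism. Since a finite intersection of inert subgroups is inert, it suffices to treat $\fix\phi$ for one $\phi\in\aut(G)$; to pass from an arbitrary family $\B$ to a finite subfamily I would invoke the Howson property of $G$ together with the uniform bound $\rk(\fix\phi)\leqslant\rk(G)$, which lets me replace $\B$ by a finite subfamily realizing the relevant intersection with any given test subgroup. The core claim then concerns a single $\phi$: if $H=\fix\phi$ is free of infinite index, then $H$ is inert in $G$.

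To prove the core claim I would find an intermediate subgroup $H\leqslant L\leqslant G$ with three properties: $L$ is finitely generated and free; $L$ is $\phi$-invariant, i.e.\ $\phi(L)\leqslant L$; and $L$ is inert in $G$. Granting such an $L$, the argument closes cleanly. Because $L$ is $\phi$-invariant, $\phi|_L$ is an injective endomorphism of the free group $L$ with $\fix(\phi|_L)=\fix\phi\cap L=H$, so Theorem~\ref{inj fixed subgp of free gp inert} gives that $H$ is inert in $L$. Inertia is then transitive along the chain $H\leqslant L\leqslant G$: for any finitely generated $K\leqslant G$ one has $H\cap K=H\cap(K\cap L)$ with $K\cap L$ finitely generated (Howson), so $\rk(H\cap K)\leqslant\rk(K\cap L)\leqslant\rk(K)$, using inertia of $H$ in $L$ and of $L$ in $G$ in turn. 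I stress that this route handles \emph{all} test subgroups $K$ at once, with no need to isolate the finite-index case: the inertia of $L$ in $G$ is exactly what absorbs it (a naive rank/index count on finite-index $K$ fails, since it ignores that $\fix\phi$ cannot simultaneously attain full rank and meet every coset of $K$).

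The natural candidate for $L$, and the main obstacle, is geometric. Realizing $\phi$ by a homeomorphism $f$ of the surface $\Sigma$, I would take $L=\pi_1$ of an essential $f$-invariant subsurface $S\subsetneq\Sigma$ carrying $\fix\phi$: such $L$ is free (as $S$ has nonempty boundary), $\phi$-invariant, and inert in $G$, because $\pi_1$ of an incompressible subsurface is a geometric subgroup whose intersections with arbitrary finitely generated subgroups are controlled by Euler characteristic. The crux is therefore to show that $\fix\phi$ is \emph{carried by} such an invariant subsurface, which is the genuinely hard input: it rests on the fixed-point theory of surface homeomorphisms (Nielsen--Thurston theory and the analysis of the fixed-point classes of $f$), establishing that the fixed subgroup is a geometric, rather than merely an abstract free, subgroup. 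Pinning down this geometricity, uniformly across all $\phi$, is where I expect the real work to lie.
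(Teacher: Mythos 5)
The paper itself contains no proof of this statement: it is quoted verbatim from Wu--Zhang \cite{WZ}, so your proposal can only be judged on its own merits. On those merits, the outer layers are fine: the dichotomy (for $\phi\neq id$, $\fix\phi$ cannot have finite index by the rank/Euler-characteristic count, hence $\fix\B$ is free), the transitivity step $\rk(H\cap K)=\rk(H\cap(K\cap L))\leqslant \rk(K\cap L)\leqslant \rk(K)$ using the Howson property, and the idea of applying Theorem~\ref{inj fixed subgp of free gp inert} to the injective endomorphism $\phi|_L$ of a $\phi$-invariant free subgroup $L$. But the entire weight of the theorem is then concentrated in the one step you explicitly defer: producing a finitely generated free subgroup $L$ with $\fix\phi\leqslant L$, $\phi(L)\leqslant L$, and $L$ inert in $G$. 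That claim is of essentially the same depth as the theorem being proved; a proposal that reduces the statement to an unproven claim of comparable difficulty, and says so (``where I expect the real work to lie''), is a plan, not a proof.

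Moreover, the deferred step fails as literally stated. If the mapping class of $\phi$ is irreducible --- pseudo-Anosov or periodic irreducible --- there is \emph{no} proper essential $f$-invariant subsurface at all, so no such $L$ exists; these cases must be split off separately (they are easy, since there $\fix\phi$ is trivial or cyclic, hence inert, but your write-up never records this). In the reducible case, Nielsen--Thurston theory gives a subsurface invariant only up to isotopy, i.e.\ $\phi(\pi_1 S)$ is \emph{conjugate} to $\pi_1 S$; since $\fix\phi$ is an invariant of the actual automorphism and not of its outer class, upgrading this to literal invariance $\phi(L)\leqslant L$ is precisely the delicate fixed-point-class/basepoint analysis carried out in \cite{JWZ} and \cite{WZ}. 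Two further assertions are quoted as if known but are themselves substantive: that incompressible-subsurface subgroups are inert in $G$ (your ``controlled by Euler characteristic''), and that an arbitrary family $\B$ can be replaced by a finite subfamily --- Howson plus the uniform bound $\rk(\fix\phi)\leqslant\rk(G)$ does not yield this, since a directed family of subgroups of bounded rank need not stabilize; one needs Takahasi-type finiteness of algebraic extensions or the finite-subfamily lemmas established in the free and surface group literature. In short, your skeleton is sensible and in fact resembles the actual strategy of \cite{WZ} (geometric realization, invariant subsurfaces, reduction to the free case, Euler-characteristic control), but everything you actually prove is the easy part.
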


And, recently, the analog of Theorem~\ref{compression of fixed subgp in free gp} has been obtained in~\cite{ZVW} for surface groups.

\begin{thm}[Zhang--Ventura--Wu, \cite{ZVW}]
Let $G$ be a surface group and $\B\subseteq \edo(G)$. Then, $\fix \B$ is compressed in $G$.
\end{thm}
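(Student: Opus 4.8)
The plan is to argue by cases according to the type of the surface group $G$, reducing everything to the two results already available: compression of fixed subgroups in free groups (Theorem~\ref{compression of fixed subgp in free gp}) and inertia of fixed subgroups of automorphism families in hyperbolic surface groups (Theorem~\ref{surface inert}). If $G$ is free, there is nothing to do, since $\fix\B$ is compressed by Theorem~\ref{compression of fixed subgp in free gp}. If $G$ is closed with $\chi(G)\geqslant 0$ --- that is, $G$ is $\Z/2$, $S_1\cong\Z^2$, or the Klein bottle group $NS_2$ --- then $G$ is virtually abelian, all of its subgroups have rank at most $2$, and rank is monotone along inclusions in its subgroup lattice (a rank-$1$ subgroup cannot contain a rank-$2$ one); hence \emph{every} subgroup of $G$ is compressed, and in particular so is $\fix\B$. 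This isolates the main case: $G$ closed with $\chi(G)<0$.

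So assume $G$ is closed with $\chi(G)<0$. I would first record the structural dichotomy: every $\phi\in\edo(G)$ is either surjective or has $\im\phi$ of infinite index. Indeed, a proper finite-index subgroup of $G$ is again a closed surface group of strictly smaller Euler characteristic, hence of rank strictly greater than $\rk(G)$, and so cannot be a quotient of $G$; thus a non-surjective $\phi$ must have image of infinite index, and infinite-index finitely generated subgroups of surface groups are free. Moreover, $G$ is Hopfian, so any surjective endomorphism is an automorphism. If every member of $\B$ is surjective, then $\B\subseteq\aut(G)$ and Theorem~\ref{surface inert} gives that $\fix\B$ is inert, hence compressed, in $G$.

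It remains to treat the case in which some $\phi_0\in\B$ is not surjective. Set $L:=\im\phi_0$, a finitely generated free group, and note $H:=\fix\B\leqslant\fix\phi_0\leqslant L$. The transfer from $L$ to $G$ is then clean. Let $K$ be any finitely generated subgroup with $H\leqslant K\leqslant G$. Since $\phi_0\in\B$ fixes $H$ pointwise, $\phi_0(H)=H$, so $H=\phi_0(H)\leqslant\phi_0(K)\leqslant\phi_0(G)=L$; and $\phi_0(K)$ is a quotient of $K$, whence $\rk(\phi_0(K))\leqslant\rk(K)$. Therefore, \emph{provided $H$ is compressed in the free group $L$}, we get $\rk(H)\leqslant\rk(\phi_0(K))\leqslant\rk(K)$, which is exactly compression of $H$ in $G$. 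Thus the whole problem reduces to a single statement about free groups: $\fix\B$ is compressed in $L$.

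The main obstacle is precisely this last reduction, and it is subtle because the maps in $\B$ do not preserve $L$. The naive attempt --- pushing the family into $\edo(L)$ via $\phi_0\circ\phi|_L$ --- fails, since from $\phi_0\phi(x)=x=\phi_0(x)$ one only deduces $\phi(x)x^{-1}\in\ker\phi_0$, not $\phi(x)=x$ (closed hyperbolic surface groups are co-Hopfian, so a non-surjective $\phi_0$ is non-injective, and this kernel cannot be ignored). Instead I would realize $H$ \emph{exactly} inside $L$ as an intersection of coincidence subgroups: writing $\iota\colon L\hookrightarrow G$ for the inclusion, we have $H=\bigcap_{\phi\in\B}\eq(\iota,\phi\iota)$, where each $\eq(\iota,\phi\iota)=\set{x\in L\mid \phi(x)=x}$ is the equalizer in $L$ of the two homomorphisms $\iota$ and $\phi\circ\iota$. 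The plan is then to prove an equalizer analogue of Theorem~\ref{inj fixed subgp of free gp inert} for such pairs --- namely that each $\eq(\iota,\phi\iota)$ is inert in the free group $L$ --- and to combine these using that a (reduced, finite) intersection of inert subgroups is again inert, together with the standard observation that the bounded rank of $\fix\B$ lets one replace an infinite $\B$ by a finite subfamily realizing the same intersection. Establishing inertia of these equalizer subgroups in $L$ is the technical heart of the argument; once it is in place, $H$ is inert and a fortiori compressed in $L$, and the transfer of the previous paragraph completes the proof.
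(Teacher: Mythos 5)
This theorem is not actually proved in the present paper: it is quoted from \cite{ZVW}, so your proposal has to be measured against the argument given there. Your preliminary reductions are all correct and match the natural structure of that argument: the free case is exactly Theorem~\ref{compression of fixed subgp in free gp}; for closed surfaces with $\chi\geqslant 0$ every subgroup is compressed; for closed $G$ with $\chi(G)<0$ the dichotomy (an endomorphism is either surjective, hence an automorphism by Hopficity, or has infinite-index free image) is right, the all-automorphism case follows from Theorem~\ref{surface inert}, and your transfer step $\rk(H)\leqslant \rk(\phi_0(K))\leqslant \rk(K)$ is clean. The genuine gap is the step you yourself call the ``technical heart'': inertia in $L$ of the equalizers $\eq(\iota,\phi\iota)=\fix\phi\cap L$, which you leave unproven. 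This is not a deferrable technicality, because it contains, as a special case, the inertia conjecture that this very paper records as open (\cite[Problem~5]{DV}). Indeed, take $G=S_g$, let $q\colon G\to F_g=\langle x_1,\ldots,x_g\rangle$ be the quotient killing $b_1,\ldots,b_g$, and $j\colon F_g\to G$, $x_i\mapsto a_i$; then $qj=Id$, so $\rho:=jq\in\edo(G)$ is a non-surjective endomorphism whose image $L=j(F_g)\cong F_g$ is free, and $\rho|_L=Id_L$. For an arbitrary $\psi\in\edo(F_g)$ put $\phi_\psi:=j\psi q\in\edo(G)$ and $\B=\{\rho,\phi_\psi\}$, with $\phi_0=\rho$. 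Then $\eq(\iota,\phi_\psi\iota)=\{x\in L : \phi_\psi(x)=x\}=j(\fix\psi)$, and also $\fix\B=j(\fix\psi)$. So the lemma your plan requires asserts precisely that $\fix\psi$ is inert in $F_g$ for \emph{every} endomorphism $\psi$ of a free group of any rank $g\geqslant 2$: this is verbatim the open inertia conjecture. Your proposal therefore proves the theorem only modulo an open problem.

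The structural reason you hit this wall is that you steer towards \emph{inertia} inside $L$ (in order to intersect the equalizers), whereas for non-injective maps on free groups the literature only provides \emph{compression}, and the proof in \cite{ZVW} stays on the compression side throughout. The key fact you overlooked is that, although a general member of $\B$ does not preserve $L$, the chosen $\phi_0$ does: $\phi_0(L)\leqslant\phi_0(G)=L$, so $\phi_0|_L\in\edo(L)$ and $\fix\phi_0=\fix(\phi_0|_L)$. For a single non-surjective endomorphism this already finishes the proof: $\fix(\phi_0|_L)$ is compressed in $L$ by Theorem~\ref{compression of fixed subgp in free gp}, and your transfer step does the rest --- no equalizer theory is needed. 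For a genuine family, the difficulty you correctly identified (the other members only impose equalizer-type conditions on $L$) is resolved in \cite{ZVW} by reducing to Martino--Ventura's compression theorem for \emph{families} of endomorphisms of free groups, never through any inertia statement for non-injective maps. A secondary soft spot, minor by comparison: your ``standard observation'' that the bounded rank of $\fix\B$ allows replacing an infinite $\B$ by a finite subfamily does not follow from bounded rank alone (descending chains of bounded rank need not stabilize, e.g.\ $\Z\supset 2\Z\supset 4\Z\supset\cdots$); the known reductions of this type themselves rely on inertia-type input, so this step, too, leans on your unproven lemma.
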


To our knowledge, there are only a few results of this type for fundamental groups of 3-manifolds; see~\cite{LW, Zhang1, Zhang2}.

\bigskip

Recently, Zhang--Ventura--Wu started investigating in~\cite{ZVW} these same questions but within the family of finite direct products of free and surface groups, i.e., groups of the form $G=G_1\times \cdots \times G_n$, where each building block $G_i$ is either 1, $F_r$, $S_g$, or $NS_k$. Here, the situation is more complicated and interesting new phenomena show up. In order to study the behaviour of fixed subgroups, it becomes relevant to distinguish between the \emph{Euclidean} building blocks, 1, $\Z$, $\Z^2$, $\Z_2$, and $NS_2$ (corresponding, respectively, to the sphere, two-punctured sphere, torus, projective plane, and Klein bottle), and the \emph{hyperbolic} building blocks, $F_r$ with $r\geqslant 2$, $S_g$ with $g\geqslant 3$, and $NS_k$ with $k\geqslant 3$ (corresponding, respectively, to a $r$-punctured disc, a connected sum of $g$ tori, and a connected sum of $k$ projective planes). Geometrically, this is, precisely, distinguishing whether the surface has non-negative or negative Euler characteristic, respectively; and algebraically, it is distinguishing whether the building block has non-trivial or trivial center, respectively (see~\cite[Lemma~4.2]{ZVW}).

Let $G$ be a finite direct product of surface groups. \cite[Theorem~4.8]{ZVW} states that mixing the two types of building blocks is the unique possible way to break boundedness of fixed subgroups of automorphisms of $G$; more precisely,

\begin{thm}[Zhang--Ventura--Wu, \cite{ZVW}]
Let $G=G_1\times \cdots \times G_n$, where each $G_i$ is a surface group. Then, $\fix \phi$ is bounded in $G$ for all $\phi \in \aut(G)$ if and only if the $G_i$'s are all Euclidean, or all hyperbolic.
\end{thm}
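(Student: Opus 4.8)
The plan is to establish the two implications separately. First I would discard any trivial factors $G_i=1$, since they do not change $G$; thus ``mixing'' means the simultaneous presence of at least one non-trivial Euclidean factor $E$ (one of $\Z$, $\Z^2$, $\Z_2$, $NS_2$) and at least one hyperbolic factor $H$ (one of $F_r$ with $r\ge2$, $S_g$ with $g\ge3$, or $NS_k$ with $k\ge3$). For the ``only if'' direction I would argue by contraposition, building from a mixed decomposition a single automorphism with unbounded fixed subgroup. The decisive structural fact is that $E$ has non-trivial center while $H$ is centerless (see \cite[Lemma~4.2]{ZVW}). Choosing a non-trivial central element $c\in Z(E)$ and a surjection $\lambda\colon H\twoheadrightarrow\langle\bar c\rangle$ onto a cyclic group of the same order as $c$ (possible since every hyperbolic $H$ has infinite abelianization), I define $\phi\in\aut(G)$ to be the identity on all other factors and the ``central twist'' $\phi(e,h)=(e\,c^{\lambda(h)},h)$ on $E\times H$; centrality of $c$ makes this a homomorphism with inverse $(e,h)\mapsto(e\,c^{-\lambda(h)},h)$. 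A short computation gives $\fix\phi\supseteq E\times\ker\lambda$. If $c$ has infinite order (all Euclidean factors except $\Z_2$), then $\ker\lambda$ is the kernel of a map $H\to\Z$ and is infinitely generated, because a closed surface of negative $\chi$, and a free group of rank $\ge2$, cannot fiber over the circle; hence $\fix\phi$ has infinite rank. When $\Z_2$ is the only available Euclidean factor I instead take $\lambda\colon H\to\Z_2$, so $\ker\lambda$ has index $2$, and a Nielsen--Schreier / Euler-characteristic rank count shows $\rk(\ker\lambda)$ exceeds $\rk(H)$ by enough to force $\rk\fix\phi>\rk G$.

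For the ``if'' direction I would treat the two homogeneous cases in turn. When all $G_i$ are hyperbolic, each is directly indecomposable with trivial center, so $Z(G)=1$ and a Krull--Remak--Schmidt argument shows that every $\phi\in\aut(G)$ permutes the factors, carrying $G_i$ isomorphically to $G_{\sigma(i)}$. Writing $\sigma$ as a product of cycles $C$, the fixed subgroup decomposes as a direct product over the cycles of diagonally embedded copies of $\fix\theta_C$, where $\theta_C$ is the composite automorphism around $C$ of a representative factor $H_C$. By Bestvina--Handel and Nielsen--Jaco--Shalen, $\rk\fix\theta_C\le\rk H_C$, whence $\rk\fix\phi\le\sum_C\rk H_C$. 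It then remains to verify $\sum_C\rk H_C\le\rk G$, which I would obtain from $G^{ab}$: since $\rk G\ge\rk G^{ab}\ge\dim_{\Z_2}(G^{ab}\otimes\Z_2)$, and tensoring with $\Z_2$ turns each free or orientable factor into its rank and each $NS_k$ (with $(\Z^{k-1}\oplus\Z_2)\otimes\Z_2=\Z_2^{k}$) into exactly $\rk(NS_k)=k$, we get $\rk G\ge\sum_i\rk G_i\ge\sum_C\rk H_C$.

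When all $G_i$ are Euclidean, $G$ is finitely generated virtually abelian and carries a characteristic finite-index translation lattice $A\cong\Z^d$; hence $\phi(A)=A$ and $\phi|_A\in GL_d(\Z)$. Then $\fix\phi\cap A=\ker(\phi|_A-I)$ is a saturated sublattice, so a direct summand of $A$ of rank at most $d$, and $\fix\phi$ is virtually this lattice. The remaining step is to bound $\rk\fix\phi$ by $\rk G$, tracking both the finite holonomy quotient $\fix\phi/(\fix\phi\cap A)$ and the torsion contributed by the $\Z_2$ and Klein-bottle factors; this is a direct, if bookkeeping-heavy, rank comparison.

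I expect the main obstacle to lie in the two rank estimates of the ``if'' direction rather than in the constructions: reconciling the factor-permutation picture with the precise rank of a product of surface groups, where the $2$-torsion of the non-orientable abelianizations must be carried along to keep $\sum_C\rk H_C\le\rk G$ tight, and controlling the finite-holonomy contribution in the virtually abelian case. By contrast, the central-twist construction and the reduction to single-factor bounds via \cite{BH} and the Nielsen--Jaco--Shalen theorem should be routine once the center dichotomy is exploited.
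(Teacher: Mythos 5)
First, a caveat on the comparison itself: the paper you were given does not prove this statement at all --- it is quoted from \cite{ZVW} (Theorem~4.8 there) --- so your argument can only be judged on its own terms and against the standard arguments it invokes.

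Most of your proposal is sound. The ``only if'' construction (the central twist $\phi(e,h)=(e\,c^{\lambda(h)},h)$) is exactly the kind of transvection used in this circle of papers (compare $a\mapsto ad$, $c_1\mapsto c_1d$ in Section~3 of the paper), and your analysis of $\ker\lambda$ is correct: when $\lambda$ surjects onto $\Z$, the kernel is infinitely generated because a non-trivial finitely generated normal subgroup of a free group of rank $\geqslant 2$ or of a hyperbolic surface group has finite index (Greenberg); in the $\Z_2$ case, an Euler-characteristic count gives $\rk(\ker\lambda)\geqslant\rk(H)+1$, and the mod-2 homology count finishes it. One repair is needed: from $\fix\phi\supseteq E\times\ker\lambda$ alone you cannot conclude that $\fix\phi$ has large rank, since rank is not monotone under passing to a bigger subgroup; you should observe that equality holds on those coordinates (an element $(e,h)$ is fixed iff $c^{\lambda(h)}=1$ iff $h\in\ker\lambda$), or that $\ker\lambda$ is the image of $\fix\phi$ under the coordinate projection to $H$, hence a quotient of it. The all-hyperbolic half of the ``if'' direction is also complete: Remak uniqueness for centerless groups (which indeed needs no chain conditions once a finite decomposition into indecomposable centerless factors is given) shows automorphisms permute the factors, and the cycle argument plus Bestvina--Handel, Nielsen--Jaco--Shalen, and the $\dim_{\Z_2}H_1$ count is correct.

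The genuine gap is the all-Euclidean half, which you leave as a ``direct, if bookkeeping-heavy, rank comparison''; that comparison \emph{is} the content of this case, and the route you sketch does not close as stated. Take the natural characteristic lattice $A$ (the $\Z$ and $\Z^2$ factors together with $\langle a_i,b_i^2\rangle\cong\Z^2$ inside each Klein-bottle factor), so that $G/A\cong\Z_2^{q+\ell}$ where $q$ counts the $\Z_2$ factors and $\ell$ the $NS_2$ factors. The estimate $\rk(\fix\phi)\leqslant\rk(\fix\phi\cap A)+\rk\bigl(\fix\phi/(\fix\phi\cap A)\bigr)$ then only yields $\rk(\fix\phi)\leqslant(a+2\ell)+(q+\ell)=\rk(G)+\ell$: each Klein-bottle factor gets counted once in the lattice ($a_i$, $b_i^2$) and once in the holonomy ($b_i$ mod $A$), whereas $\rk(NS_2)=2$ because $b_i$ does double duty. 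So the ``bookkeeping'' overshoots by $\ell$ and a finer argument is required; e.g.\ already $\phi=\mathrm{id}$ on $G=NS_2$ gives bound $3$ versus the true rank $2$. The clean way to close this case dispenses with the automorphism entirely: in the Euclidean case \emph{every} subgroup $K\leqslant G$ satisfies $\rk(K)\leqslant\rk(G)$. Indeed, each building block $P\in\{1,\Z,\Z^2,\Z_2,NS_2\}$ has all of its subgroups of rank at most $\rk(P)$ (the subgroups of $NS_2$ are $1,\Z,\Z^2,NS_2$, as used in Proposition~2.2 of the paper); for the projection $\pi$ killing one factor one has $\rk(K)\leqslant\rk(\pi K)+\rk(K\cap\ker\pi)$, so induction on the number of factors gives $\rk(K)\leqslant\sum_i\rk(G_i)$; and $\sum_i\rk(G_i)=\rk(G)$ by the same $\dim_{\Z_2}H_1(-;\Z_2)$ count you already use in the hyperbolic case. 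Replacing your third paragraph by this observation makes the proof complete.
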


Then, the authors gave necessary conditions on $G$ for all fixed subgroups of automorphisms of $G$ to be compressed in $G$, or to be inert in $G$:

\begin{thm}[Zhang--Ventura--Wu, \cite{ZVW}]\label{products compressed}
Let $G=G_1\times \cdots \times G_n$, where each $G_i$ is a surface group. If $\fix \phi$ is compressed in $G$ for every $\phi\in \aut(G)$, then $G$ must be of one of the following forms:
\begin{itemize}
\item[(euc1)] $G=\Z^p \times \Z_2^q$ for some $p,q\geqslant 0$; or
\item[(euc2)] $G=NS_2\times \Z_2^q$ for some $q\geqslant 0$; or
\item[(euc3)] $G=NS_2\times \Z^p \times \Z_2$ for some $p\geqslant 1$; or
\item[(euc4)] $G=NS_2^{\, \ell}\times \Z^p$ for some $\ell\geqslant 1$, $p\geqslant 0$; or
\item[(hyp1)] $G=F_r\times NS_3^{\ell}$ for some $r\geqslant 2$, $\ell\geqslant 0$; or
\item[(hyp2)] $G=S_g\times NS_3^{\ell}$ for some $g\geqslant 2$, $\ell\geqslant 0$; or
\item[(hyp3)] $G=NS_k\times NS_3^{\ell}$ for some $k\geqslant 3$, $\ell\geqslant 0$.
\end{itemize}
\end{thm}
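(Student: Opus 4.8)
The plan is to prove the contrapositive: assuming $G=G_1\times\cdots\times G_n$ is \emph{not} on the list, I will produce a single $\phi\in\aut(G)$ whose fixed subgroup is not compressed. Since compression implies boundedness, the preceding dichotomy (boundedness forces all blocks Euclidean or all hyperbolic) lets me assume one of these two regimes; after collecting the isomorphic abelian blocks I may write every all-Euclidean product as $NS_2^{\ell}\times\Z^p\times\Z_2^q$ and every all-hyperbolic product as a product of blocks $F_r,S_g,NS_k$. A short bookkeeping shows that being off the list means exactly: in the Euclidean case either $\ell=1,p\geqslant1,q\geqslant2$ or $\ell\geqslant2,q\geqslant1$; in the hyperbolic case that at least two factors differ from $NS_3$. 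I would also record the elementary reduction that non-compression is inherited when one multiplies by a further direct factor $R$ (replace $\phi$ by $\phi\times\mathrm{id}$ and the witnessing overgroup $K$ by $K\times R$, using that rank is additive across the relevant products), so that it suffices to treat the minimal forbidden configurations $NS_2\times\Z\times\Z_2^2$ and $NS_2^2\times\Z_2$ on the Euclidean side, and $H_1\times H_2$ with $H_1,H_2\in\{F_r,S_g,NS_k\ (k\geqslant4)\}$ on the hyperbolic side.

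The single engine in all cases is the failure of \emph{monotonicity of rank}, converted into non-compression by realizing a high-rank subgroup as a fixed subgroup that lies inside a lower-rank overgroup. I would first run this on the clean model $F_{r_1}\times F_{r_2}$. In each factor pick a non-inner automorphism with a full-rank fixed subgroup of \emph{conjugator type}: for $F_2=\langle a,b\rangle$ take $\phi_i\colon a\mapsto a,\ b\mapsto ba^{-1}$, whose fixed subgroup is $\langle a,\,bab^{-1}\rangle$ of rank $2$; in general fix $x_1,\dots,x_{r-1}$ and send $x_r\mapsto x_rx_1^{-1}$, giving $\fix\phi=\langle x_1,\dots,x_{r-1},\,x_rx_1x_r^{-1}\rangle$ of rank $r$. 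Then $\phi_1\times\phi_2$ has $\fix=\fix\phi_1\times\fix\phi_2$ of rank $r_1+r_2=\rk(F_{r_1}\times F_{r_2})$, whereas adjoining to the base generators the \emph{single} diagonal conjugator $(x_{r_1}^{(1)},x_{r_2}^{(2)})$ yields an overgroup $K$ generated by only $(r_1-1)+(r_2-1)+1=r_1+r_2-1$ elements, since one conjugation by $(x_{r_1}^{(1)},x_{r_2}^{(2)})$ simultaneously recovers the conjugated generator in each factor. A mod-$p$ abelianization bound gives $\rk K=r_1+r_2-1<\rk\fix$, so $\fix$ is not compressed. The same template treats $S_g$ and $NS_k\ (k\geqslant4)$ once one exhibits, for each, a non-inner automorphism with a conjugator-type fixed subgroup; here the surface relation makes both the construction and the verification that the fixed subgroup truly has the asserted rank the delicate step.

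For the Euclidean configurations the analogous ``independent conjugate'' is supplied by the inversion $b_ia_ib_i^{-1}=a_i^{-1}$ acting \emph{across} factors: in $NS_2^{(1)}\times NS_2^{(2)}$ conjugation by $b_1$ carries $a_1a_2$ to $a_1^{-1}a_2$, which is independent of it, so cross-factor products of the $a_i$ play the role of genuine free conjugates and force certain finite-index subgroups to have jumping rank. Exploiting this together with the nontrivial centers $\langle b_i^2\rangle$, the quotients $\alpha_i\colon NS_2^{(i)}\to\Z_2$ coming from $H_1(NS_2)$, and the direct $\Z_2$-factors, the plan is to build $\phi$ (a product of a suitable self-map of the $NS_2$-part with shears into the centers and into the torsion) whose fixed subgroup realizes the rank jump, and a merged overgroup of strictly smaller rank. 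The main obstacle is precisely this Euclidean bookkeeping: these groups are virtually abelian (flat-manifold groups), so the relevant ranks are controlled by the mod-$2$ cohomology of finite covers, and one must compute $\dim_{\Z_2}H^1$ of those covers exactly to locate the jump — this is what should pin down the thresholds $q\geqslant2$ (two independent torsion shears are needed) and $\ell\geqslant2$ (two $NS_2$-blocks are needed to form the cross-factor conjugate). Confirming that the constructed subgroup is genuinely a fixed subgroup, and that the merging overgroup lowers the rank by at least one, is the crux of the whole argument.
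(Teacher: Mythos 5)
First, a point of comparison that matters for this review: the statement you were asked to prove is \emph{quoted} from \cite{ZVW} — the present paper never proves it, but uses it as a black box in deriving Theorem~\ref{main-thm}. So your proposal can only be measured against what a complete proof must contain; the closest material in this paper is Proposition~\ref{euc3}, which is exactly the kind of explicit construction your Euclidean plan would need. With that said, the parts of your skeleton that are actually carried out are correct: the contrapositive set-up; disposing of the mixed Euclidean/hyperbolic case via the boundedness dichotomy (non-bounded implies non-compressed, taking $K=G$); the bookkeeping that off-list means $\ell=1,\,p\geqslant 1,\,q\geqslant 2$ or $\ell\geqslant 2,\,q\geqslant 1$ on the Euclidean side and at least two non-$NS_3$ factors on the hyperbolic side; and the $F_{r_1}\times F_{r_2}$ example, where the diagonal overgroup $K=\langle x_1^{(1)},\dots,x_{r_1-1}^{(1)},x_1^{(2)},\dots,x_{r_2-1}^{(2)},(x_{r_1},x_{r_2})\rangle$ really does contain $\fix(\phi_1\times\phi_2)$ and has at most $r_1+r_2-1$ generators. (One caveat even in your reduction step: rank is only \emph{subadditive} over direct products in general; your "rank is additive across the relevant products" needs the observation that every group in play has abelianization $\Z^a\times\Z_2^b$, so mod-$2$ homology supplies the matching lower bound. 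This is fixable but not free.)

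The genuine gaps are the two steps you yourself defer, and they carry essentially all the content of the theorem. (i) For the closed-surface blocks $S_g$ and $NS_k$ ($k\geqslant 4$) you produce no automorphism at all; worse, your template requires knowing $\fix\phi_i$ \emph{exactly}, not just up to a rank-$r$ subgroup it contains, because non-compression needs the full fixed subgroup to lie inside the low-rank overgroup $K$ — a containment $\langle\text{conjugator-type generators}\rangle\leqslant\fix\phi_i$ is useless for this. The natural candidates (Dehn-twist automorphisms such as $b_1\mapsto b_1a_1$, all other generators fixed) do preserve the surface relation, but proving their fixed subgroup equals the conjugator-type subgroup is precisely the delicate verification, requiring \cite{JWZ}-type tools; it cannot be waved through. (ii) The Euclidean case contains no construction whatsoever: no $\phi$, no computed fixed subgroup, no overgroup of strictly smaller rank; the appeal to "mod-$2$ cohomology of finite covers" is a heuristic, not an argument, and you concede it is "the crux of the whole argument." The examples that are actually needed are elementary and explicit, in the style of Proposition~\ref{euc3} of this paper ($a\mapsto ad$, $b\mapsto ba$, $c_1\mapsto c_1d$, with $\fix\phi=\langle a^2,b^2,ac_1,d\rangle\cong\Z^3\times\Z_2$ of rank $4$ inside $\langle ac_1,b,d\rangle$ of rank $3$), but the off-list configurations $NS_2\times\Z\times\Z_2^2$ and $NS_2^2\times\Z_2$ require their own versions, and computing the fixed subgroup exactly is again the entire work. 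That this is subtle is underscored by the present paper's main point: case (euc3), which \emph{is} on the \cite{ZVW} list, also admits a non-compressed fixed subgroup, i.e., even the authors of \cite{ZVW} did not have a sharp picture of these Euclidean constructions. As it stands, your proposal proves the theorem only for products of free groups plus the mixed case; the surface and Euclidean cases are missing.
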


\begin{thm}[Zhang--Ventura--Wu, \cite{ZVW}]\label{products inert}
Let $G=G_1\times \cdots \times G_n$, where each $G_i$ is a surface group. If $\fix \phi$ is inert in $G$ for every $\phi\in \aut(G)$, then $G$ must be of the form \emph{(euc1)}, or \emph{(euc2)}, or \emph{(euc3)}, or \emph{(euc4)}, or
\begin{itemize}
\item[(hyp1')] $G=F_r$ for some $r\geqslant 2$; or
\item[(hyp2')] $G=S_g$ for some $g\geqslant 2$; or
\item[(hyp3')] $G=NS_k$ for some $k\geqslant 3$.
\end{itemize}
\end{thm}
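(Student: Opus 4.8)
The plan is to first reduce to the classification already available for compression, and then to exclude the remaining cases by a single explicit construction.

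Since inertia implies compression, the hypothesis forces $\fix\phi$ to be compressed in $G$ for every $\phi\in\aut(G)$, so Theorem~\ref{products compressed} puts $G$ in one of the forms (euc1)--(euc4) or (hyp1)--(hyp3). The four Euclidean forms appear verbatim in the target list, so nothing needs to be done for them. Comparing the two lists, the only forms left to exclude are the hyperbolic ones carrying at least one extra $NS_3$ factor, i.e.\ $G=H\times NS_3^{\ell}$ with $\ell\geq 1$ and $H\in\{F_r,S_g,NS_k\}$; equivalently, $G$ admits a nontrivial direct decomposition $G=A\times B$ in which $B$ is a hyperbolic surface group. My goal is therefore to produce, for each such $G$, one automorphism whose fixed subgroup is not inert.

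The mechanism I would use is the failure of the Howson property in direct products: a hyperbolic direct factor is never inert in $G$. Concretely, write $G=A\times B$ with $B=NS_3$ and $A=H\times NS_3^{\ell-1}\neq 1$, and take $\phi=\phi_A\times\mathrm{id}_B$ for any non-identity $\phi_A\in\aut(A)$ (which exists, $A$ being a nontrivial product of surface groups); then $\fix\phi=\fix\phi_A\times B\supseteq 1\times B$. Since $\phi_A\neq\mathrm{id}_A$ there is $t_A\in A$ with $\phi_A(t_A)\neq t_A$, and since $B$ is non-elementary hyperbolic there are $t_B,v\in B$ generating a free subgroup of rank $2$. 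I then set
\[
K=\langle (t_A,t_B),\,(1,v)\rangle\leq G,
\]
which is free of rank $2$, as its projection to $B$ is injective onto $\langle t_B,v\rangle\cong F_2$. Its projection to $A$ is the cyclic group $\langle t_A\rangle$; and because $A$ has unique roots (being a product of surface groups) the fixed subgroup $\fix\phi_A$ is isolated, so $t_A\notin\fix\phi_A$ forces $\langle t_A\rangle\cap\fix\phi_A=1$. Hence every element of $\fix\phi\cap K$ has trivial $A$-coordinate, giving
\[
\fix\phi\cap K=K\cap(1\times B)=1\times\langle\, t_B^{\,a}\,v\,t_B^{-a}\ :\ a\in\Z\,\rangle,
\]
the (infinitely generated) normal closure of $v$ in $\langle t_B,v\rangle$. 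Thus $\rk(\fix\phi\cap K)=\infty>2=\rk(K)$, so $\fix\phi$ is not inert in $G$ and $G$ is excluded. This leaves precisely the forms (euc1)--(euc4) together with $H=F_r,S_g,NS_k$, i.e.\ (hyp1$'$)--(hyp3$'$).

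The part I expect to require the most care is isolating the right source of non-inertia. The naive idea---forcing $\fix\phi\cap K$ to be a proper finite-index subgroup of $K$ and gaining rank through Nielsen--Schreier---is systematically blocked: each factor's fixed subgroup is simultaneously inert (by Theorem~\ref{surface inert} for surface factors and Theorem~\ref{inj fixed subgp of free gp inert} for free factors) and isolated, and these two properties together forbid any finite rank jump originating in a single coordinate. The resolution is that inertia here fails through an \emph{infinitely generated} intersection rather than a finite rank excess, and the construction must route this through a fixed subgroup that contains an entire hyperbolic factor. A useful consistency check is built into the argument: the subgroup $K$ relies on $B$ containing a free group of rank $2$, which is exactly why the Euclidean building blocks---all virtually abelian, hence without nonabelian free subgroups---are untouched and correctly remain in the list.
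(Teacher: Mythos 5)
This theorem is quoted by the paper from \cite{ZVW} and is not proved anywhere in the present text, so there is no internal proof to compare against; I can only assess your argument on its own terms, and it is correct. Your reduction is the right one: inertia implies compression, so Theorem~\ref{products compressed} leaves exactly the cases (hyp1)--(hyp3) with $\ell\geqslant 1$ to be excluded, i.e.\ $G=A\times B$ with $A\neq 1$ and $B=NS_3$. Your construction then does the job: $K=\langle (t_A,t_B),(1,v)\rangle$ is free of rank $2$ because the projection to $B$ is injective on $K$ (a word in the generators with trivial $B$-coordinate is freely trivial, since $t_B,v$ are free generators); the $A$-coordinate of any element of $K$ is a power $t_A^e$, and $t_A^e\in\fix\phi_A$ with $e\neq 0$ would force $\phi_A(t_A)^e=t_A^e$, hence $\phi_A(t_A)=t_A$ by uniqueness of roots, a contradiction; so $\fix\phi\cap K$ is the kernel of $K\cong F_2\to\Z$, namely the infinitely generated normal closure of $(1,v)$, and inertia fails. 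This is the classical example of a finitely generated subgroup of $\Z\times F_2$ meeting $1\times F_2$ in an infinitely generated subgroup, which is also the mechanism behind the original argument in \cite{ZVW}, where the automorphism can even be taken inner (conjugation by an element of $A$ times the identity on $B$); your version with an arbitrary non-identity $\phi_A$ is essentially the same, just phrased slightly more generally.

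One point to tighten: you justify uniqueness of roots in $A$ by saying it is ``a product of surface groups,'' but surface groups in general do \emph{not} have unique roots --- the paper itself notes that $(a^rb)^2=b^2$ in $NS_2$, which is precisely why the Euclidean cases behave differently. What you actually need, and what is true, is that $A$ is a product of free groups and \emph{hyperbolic} surface groups: these are torsion-free with cyclic centralizers, hence have unique roots, and a direct product inherits this coordinatewise. Since in every case you consider $A=H\times NS_3^{\ell-1}$ with $H\in\{F_r,S_g,NS_k\}$ hyperbolic, all factors are of this kind and your argument stands; just make the justification point to hyperbolicity rather than to being a surface group.
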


\bigskip

In the present paper we complete a full characterization of compression and inertia for the Euclidean case (which happens to be the same for automorphisms and for endomorphisms), by proving the following improvements of Theorems~\ref{products compressed} and~\ref{products inert}:

\begin{thm}\label{main-thm}
Let $G=G_1\times \cdots \times G_n$, where each $G_i$ is a surface group of Euclidean type. Then, the following are equivalent:
\begin{itemize}
\item[(a)] $\fix \phi$ is compressed in $G$ for every $\phi\in \edo(G)$;
\item[(b)] $\fix \phi$ is compressed in $G$ for every $\phi\in \aut(G)$;
\item[(c)] $G$ is of the form \emph{(euc1)}, or \emph{(euc2)}, or \emph{(euc4)}.
\end{itemize}
\end{thm}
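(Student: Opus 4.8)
The plan is to prove the cycle (a) $\Rightarrow$ (b) $\Rightarrow$ (c) $\Rightarrow$ (a). The implication (a) $\Rightarrow$ (b) is immediate, since $\aut(G)\subseteq\edo(G)$. The two implications carrying content are (b) $\Rightarrow$ (c), where the new ingredient is ruling out (euc3), and (c) $\Rightarrow$ (a), where one must establish compression of all endomorphism fixed subgroups for the three surviving families.

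For (b) $\Rightarrow$ (c), since every $G_i$ is Euclidean, Theorem~\ref{products compressed} already forces $G$ into one of (euc1)--(euc4). It remains to exclude (euc3), i.e.\ to exhibit, for each $G=NS_2\times\Z^p\times\Z_2$ with $p\geq 1$, an automorphism whose fixed subgroup is not compressed. Writing $NS_2=\langle a,b\mid bab^{-1}a\rangle$, $\Z^p=\langle c_1,\dots,c_p\rangle$ and $\Z_2=\langle d\rangle$, I would use the map $\phi$ determined by $a\mapsto ad$, $b\mapsto ab$, $c_1\mapsto c_1 d$, and fixing $c_2,\dots,c_p,d$. A direct check (using $(ab)^j=a^{j\bmod 2}b^j$ and $bab^{-1}=a^{-1}$) shows $\phi$ is a well-defined surjective endomorphism, hence by Hopficity an automorphism, with $\fix\phi=\langle ac_1,a^{-1}c_1,b^2\rangle\times\langle c_2,\dots,c_p\rangle\times\langle d\rangle\cong\Z^{p+2}\times\Z_2$ of rank $p+3=\rk G$. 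The key point is that $\fix\phi\leq K:=\langle ac_1,b\rangle\times\langle c_2,\dots,c_p\rangle\times\langle d\rangle$, where the subgroup $\langle ac_1,b\rangle$ (on which $b$ conjugates $ac_1$ to $a^{-1}c_1$) is $2$-generated yet of Hirsch length $3$, so it properly contains the free-abelian group $\langle ac_1,a^{-1}c_1,b^2\rangle\cong\Z^3$ of strictly larger rank. Thus $\rk K=p+2<p+3=\rk\fix\phi$, and $\fix\phi$ is not compressed. This rank drop $2<3$ is a genuinely ``twisted'' crystallographic phenomenon impossible in free abelian groups, and it is crucially enabled by the torsion element $d$, which supplies a solution $x=ad\neq a$ of $x^2=a^2$ and hence lets $\phi$ fix a ``swap lattice'' while moving $a$ and $c_1$.

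For (c) $\Rightarrow$ (a), the case (euc1) is the easiest: $G$ is finitely generated abelian, so every chain $H\leq K\leq G$ has $\rk H\leq\rk K$ (submodules of an $n$-generated module over the PID $\Z$ are $n$-generated), whence every subgroup, in particular every fixed subgroup, is compressed. For (euc2), $G=NS_2\times\Z_2^q$ has Hirsch length $2$, so the rank-drop phenomenon above (which needs Hirsch length $\geq 3$) cannot occur; one verifies that rank is monotone on the subgroup lattice and concludes that all subgroups, hence all fixed subgroups, are compressed. The substantial case is (euc4), $G=NS_2^{\ell}\times\Z^p$, where rank-deficient subgroups such as $\langle ac,b\rangle$ do exist but must be shown never to arise as fixed subgroups. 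Here I would study $\phi\in\edo(G)$ through the integral linear map it induces on the translation lattice $T\cong\Z^{2\ell+p}$ together with the constrained behaviour on the point group $G/T\cong\Z_2^{\ell}$, and prove that $\fix\phi$ is always of ``diagonal-lattice'' type, with rank equal to its Hirsch length, hence compressed. The structural obstruction to a swap-lattice fixed subgroup is exactly the torsion-freeness of $G$: realizing one would require some $x\notin\langle a_i\rangle$ with $x^2=a_i^2$, i.e.\ nontrivial $2$-torsion, which is unavailable.

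The main obstacle I anticipate is precisely this positive direction for (euc4) when $\ell\geq 2$: one must show, uniformly over all (possibly non-injective) endomorphisms and all direct factors, that $\fix\phi$ never realizes a rank-deficient configuration. This demands a reasonably complete description of fixed subgroups of endomorphisms of $NS_2^{\ell}\times\Z^p$ in terms of the fixed sublattice $\ker(\phi|_T-I)$ and the induced involutions on it, followed by a case-by-case verification of compression. By comparison, the counterexample for (euc3) and the abelian case (euc1) are routine once the twisted, rank-dropping subgroup $\langle ac,b\rangle$ has been identified as the governing mechanism.
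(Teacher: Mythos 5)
Your (a)$\Rightarrow$(b) is fine, and your (b)$\Rightarrow$(c) is correct and is essentially the paper's own argument (Proposition~\ref{euc3}) with a cosmetic variant of the automorphism: indeed $a\mapsto ad$, $b\mapsto ab$, $c_1\mapsto c_1d$ fixes $\langle ac_1,a^2,b^2\rangle\times\langle c_2,\dots,c_p\rangle\times\langle d\rangle$, of rank $p+3$, and since $b(ac_1)b^{-1}=a^{-1}c_1$ forces $a^2\in\langle ac_1,b\rangle$, this fixed subgroup lies in $K=\langle ac_1,b\rangle\times\langle c_2,\dots,c_p\rangle\times\langle d\rangle$ of rank at most $p+2$. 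Case (euc1) is also fine. But in (euc2) your justification is not a proof: ``rank drop needs Hirsch length at least $3$'' is not a theorem (finite groups have Hirsch length $0$ and contain subgroups of arbitrarily large rank), and monotonicity of rank on the subgroup lattice of $NS_2\times\Z_2^q$ is exactly the statement requiring proof. The paper proves it (Proposition~\ref{euc2}) by splitting every subgroup $H$ as $\pi H\times(H\cap\Z_2^q)$, where $\pi$ is the projection to $NS_2$; the splitting exists precisely because the correcting elements $e_u,e_v\in\Z_2^q$ square to $1$, so the Klein bottle relation $vuv^{-1}u$ lifts to $H$ --- the mechanism is the order-$2$ torsion, not the Hirsch length (see Remark~\ref{remark}).

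The serious gap is (euc4), and you half-acknowledge it. Your plan is to show that $\fix\phi$ is of ``diagonal-lattice'' type ``with rank equal to its Hirsch length, hence compressed'', but that implication is false in exactly these groups: $H=\langle a^2,b^2,c^2\rangle\cong\Z^3$ has rank equal to its Hirsch length $3$, yet it lies inside the $2$-generated subgroup $\langle ac,b\rangle\leqslant NS_2\times\Z$, so it is \emph{not} compressed (this is the paper's Remark~\ref{remark}). Thus no intrinsic description of $\fix\phi$ in terms of rank versus Hirsch length can yield compression; one needs a property of $\fix\phi$ that controls all of its overgroups. The paper's property is closure under square roots: every $h\in G'=\langle a_1^2,\dots,a_\ell^2\rangle$ has a \emph{unique} square root in $G$ (this is not mere torsion-freeness: $(a_ib_i)^2=b_i^2$ in $NS_2$, so general square roots are non-unique), hence $\sqrt h\in\fix\phi$ for every $h\in\fix\phi\cap G'$; and Proposition~\ref{compressed} shows that any subgroup $H$ with this closure property satisfies $\rk(H)=\rk(\rho(H))$ for the abelianization $\rho\colon G\to G/G'$, whence for $H\leqslant K$ one gets $\rk(H)=\rk(\rho(H))\leqslant\rk(\rho(K))\leqslant\rk(K)$, rank being monotone in the finitely generated abelian group $G/G'$ (Lemma~\ref{compr}). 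Your remark that $x^2=a_i^2$ forces $x=a_i$ is precisely this uniqueness of square roots, so you have identified the right ingredient; but you never convert it into compression, and the conversion you do propose --- via rank equalling Hirsch length --- would fail.
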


\begin{thm}\label{main-thm2}
Let $G=G_1\times \cdots \times G_n$, where each $G_i$ is a surface group of Euclidean type. Then, the following are equivalent:
\begin{itemize}
\item[(a)] $\fix \phi$ is inert in $G$ for every $\phi\in \edo(G)$;
\item[(b)] $\fix \phi$ is inert in $G$ for every $\phi\in \aut(G)$;
\item[(c)] $G$ is of the form \emph{(euc1)}, or \emph{(euc2)}.
\end{itemize}
\end{thm}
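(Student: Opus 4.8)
The plan is to prove the cycle (a)$\Rightarrow$(b)$\Rightarrow$(c)$\Rightarrow$(a). The implication (a)$\Rightarrow$(b) is immediate since $\aut(G)\subseteq\edo(G)$. For (b)$\Rightarrow$(c) I would first feed (b) into Theorem~\ref{products inert}: as every $G_i$ is Euclidean, $G$ cannot have hyperbolic type, so (b) already forces $G$ to be one of \emph{(euc1)}--\emph{(euc4)}. It then remains to discard \emph{(euc3)} and those $G$ in \emph{(euc4)} lying outside \emph{(euc1)}$\cup$\emph{(euc2)}, i.e.\ $G=NS_2^{\ell}\times\Z^p$ with $\ell\geqslant 2$, or with $\ell=1$ and $p\geqslant 1$. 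Each such $G$, and each group of type \emph{(euc3)}, contains $NS_2\times\Z$ or $NS_2\times NS_2$ as a direct factor; so it suffices to produce in each of these two groups an automorphism with non-inert fixed subgroup, and then extend it by the identity on the complementary factor (the witnessing intersection survives the extension).

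The non-inertness rests on a rank jump that becomes available precisely because these groups have Hirsch length $\geqslant 3$. In $NS_2\times\Z=\langle a,b,c\mid bab^{-1}=a^{-1},\ c \text{ central}\rangle$ the rank-$2$ subgroup $K=\langle ac,b\rangle$ already contains the free abelian group $\langle ac,c^2,b^2\rangle\cong\Z^3$. To realize such a jump inside a fixed subgroup I would take the automorphism $\phi$ fixing $a$ and $c$ and sending $b\mapsto ab$ (its inverse sends $b\mapsto a^{-1}b$, and the defining relation is preserved). A direct normal-form computation gives $\fix\phi=\langle a,b^2,c\rangle\cong\Z^3$; intersecting with $K$, which consists of the elements whose $a$- and $c$-exponent sums have equal parity, yields $\fix\phi\cap K\cong\Z^3$, so $\rk(\fix\phi\cap K)=3>2=\rk K$. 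Since $\fix\phi$ has finite index it is still compressed, so it is inertness alone that fails here, exactly matching the discrepancy between Theorem~\ref{products compressed}/\ref{main-thm} and the present statement. For $NS_2\times NS_2$ I would repeat the argument with the central element $b_2^2$ in the role of $c$, using $\phi\colon a_1\mapsto a_1,\ b_1\mapsto a_1b_1$ on the first factor and $a_2\mapsto a_2^{-1},\ b_2\mapsto a_2b_2$ on the second, for which $\fix\phi=\langle a_1,b_1^2,b_2^2\rangle\cong\Z^3$ meets the rank-$2$ subgroup $\langle a_1b_2^2,b_1\rangle$ in a copy of $\Z^3$.

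For (c)$\Rightarrow$(a) I would prove the stronger assertion that \emph{every} finitely generated subgroup of an \emph{(euc1)} or \emph{(euc2)} group is inert, by showing the rank function is monotone on subgroups. For $G=\Z^p\times\Z_2^q$ this is immediate: any subgroup $A$ is isomorphic to $\Z^r\times\Z_2^{s}$ with $\rk A=r+s$, and both the free rank $r$ and the torsion dimension $s$ are monotone. For $G=NS_2\times\Z_2^q$ the crux — and the step I expect to be the main obstacle — is the splitting lemma: because $G$ is a \emph{direct} product with central torsion, every subgroup $K$ decomposes as $K\cong\overline K\times T_K$, where $\overline K$ is the image of $K$ in $NS_2$ and $T_K=K\cap\Z_2^q$. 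Granting this, $\rk K=\rk\overline K+\dim_{\F_2}T_K$, and since the subgroups of $NS_2$ are only $1,\Z,\Z^2,NS_2$ (so $\rk\overline K$ is monotone) while $\dim_{\F_2}T_K$ is monotone, monotonicity of $\rk$ and hence inertness of all subgroups follow. The work in the splitting lemma is to verify that the classifying homomorphism $\overline K\to\Z_2^q/T_K$ always lifts to $\overline K\to\Z_2^q$; this holds because the target is an $\F_2$-vector space and the surjection $\Z_2^q\to\Z_2^q/T_K$ splits. It is exactly this lifting that fails once a genuine $\Z$-direction is present, as in $NS_2\times\Z$, which is what conceptually separates \emph{(euc2)} from \emph{(euc4)} and accounts for the dichotomy in the theorem.
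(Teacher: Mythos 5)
Your proposal is correct, and its skeleton matches the paper's: (a)$\Rightarrow$(b) is trivial; (b)$\Rightarrow$(c) is done via explicit automorphisms of $NS_2\times\Z$ and $NS_2\times NS_2$ with non-inert fixed subgroup, extended by the identity on complementary factors; and (c)$\Rightarrow$(a) via a splitting theorem for subgroups of $NS_2\times\Z_2^q$ plus monotonicity of rank. Your two automorphisms are minor variants of the paper's $\phi_1,\phi_2$ in Proposition~\ref{no-comp} (the paper takes $b\mapsto ba$, and on the second Klein bottle $a_2\mapsto a_2$, $b_2\mapsto b_2^{-1}$, giving $\fix\phi_2=\langle a_1,b_1^2,a_2\rangle$), and your versions compute out correctly. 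Two steps, however, are genuinely different, and both are improvements in self-containedness. (i) For (b)$\Rightarrow$(c), the paper routes through Theorem~\ref{main-thm}, whose proof disposes of (euc3) with the separate \emph{compression} counterexample of Proposition~\ref{euc3}; you instead start from Theorem~\ref{products inert} and observe that every (euc3) group has $NS_2\times\Z$ as a direct factor, so the same inertia counterexample eliminates (euc3) and the nontrivial (euc4) cases uniformly, making the proof of this theorem independent of Section~3. (ii) For (euc2), the paper splits $1\to H\cap\Z_2^q\to H\to \pi H\to 1$ by a case analysis on the isomorphism type of $\pi H\in\{1,\Z,\Z^2,NS_2\}$, choosing preimages of generators and checking each presentation using $e_u^2=1$; you obtain the splitting presentation-free: the fibers of $K\to\overline{K}$ are cosets of $T_K$, the coset map $\overline{K}\to\Z_2^q/T_K$ is a homomorphism (because multiplication in the ambient direct product is coordinatewise), and it lifts to $\Z_2^q$ by composing with any $\F_2$-linear section of $\Z_2^q\twoheadrightarrow\Z_2^q/T_K$; the lift $\widetilde\chi$ then gives the homomorphic section $x\mapsto(x,\widetilde\chi(x))$. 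This argument is more conceptual and isolates exactly what fails for $NS_2\times\Z$ (cf.\ Remark~\ref{remark}): surjections of $\F_2$-vector spaces split, while $\Z\to\Z/c\Z$ does not. Do write out these routine verifications, which your sketch only asserts. One small caveat: your aside that $\fix\phi$ ``has finite index so it is still compressed'' uses a false general principle (finite-index subgroups of free groups have larger rank); the conclusion is nevertheless true here since $\fix\phi=\langle a,b^2,c\rangle$ has index $2$ and rank $3=\rk(G)$, or by Corollary~\ref{euc4}.
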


As an immediate corollary, together with Theorems~\ref{products inert}, \ref{inj fixed subgp of free gp inert}, and~\ref{surface inert}, we also get a full characterization of inertia for both types of surfaces, but only concerning automorphisms (the endomorphism case depends on the inertia conjecture, which is still open):

\begin{thm}\label{main-thm3}
Let $G=G_1\times \cdots \times G_n$, where each $G_i$ is a surface group. Then, the following are equivalent:
\begin{itemize}
\item[(a)] $\fix \phi$ is inert in $G$ for every $\phi\in \aut(G)$;
\item[(b)] $G$ is of the form \emph{(euc1)}, or \emph{(euc2)}, or \emph{(hyp1')}, or \emph{(hyp2')}, or \emph{(hyp3')}.
\end{itemize}
\end{thm}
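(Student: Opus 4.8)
The plan is to treat this statement as what it is advertised to be—a corollary—and to derive the equivalence purely by assembling the classification already available, with no fresh geometric input. Concretely, I would prove the two implications separately, drawing the content of (b)$\Rightarrow$(a) from the three inertia theorems (Dicks--Ventura~\ref{inj fixed subgp of free gp inert} for free groups, Wu--Zhang~\ref{surface inert} for negatively-curved surfaces, and the new Euclidean characterization~\ref{main-thm2}), and the content of (a)$\Rightarrow$(b) from the necessary-condition Theorem~\ref{products inert} refined by Theorem~\ref{main-thm2}. The bulk of the work is matching each group appearing on the relevant list to exactly one cited theorem.

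For (a)$\Rightarrow$(b) I would begin from the hypothesis that $\fix\phi$ is inert in $G$ for every $\phi\in\aut(G)$ and immediately invoke Theorem~\ref{products inert}, which narrows $G$ to one of the seven forms (euc1)--(euc4) or (hyp1')--(hyp3'). The three hyperbolic forms already appear in (b), so the only remaining task concerns the Euclidean ones. Here I would observe that each of (euc1)--(euc4) is a direct product of Euclidean-type surface groups, so Theorem~\ref{main-thm2} applies to $G$; since our hypothesis is exactly condition (b) of that theorem, its implication (b)$\Rightarrow$(c) forces $G$ to be of form (euc1) or (euc2). In both the hyperbolic and the Euclidean case, then, $G$ lies on the list in (b). It is precisely this step that carries the genuine content: Theorem~\ref{products inert} alone leaves (euc3) and (euc4) as a priori possibilities, and it is the new Euclidean result of the present paper that discards them.

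For the converse (b)$\Rightarrow$(a) I would argue form by form, each time producing inertia of $\fix\phi$ for an arbitrary $\phi\in\aut(G)$ from a single cited theorem. If $G$ is (euc1) or (euc2), the implication (c)$\Rightarrow$(b) of Theorem~\ref{main-thm2} gives the conclusion directly. If $G=F_r$ with $r\geq 2$, I would note that $\{\phi\}$ is a one-element family of injective endomorphisms (automorphisms being bijective) and apply Theorem~\ref{inj fixed subgp of free gp inert} to get $\fix\{\phi\}=\fix\phi$ inert. Finally, if $G=S_g$ with $g\geq 2$ or $G=NS_k$ with $k\geq 3$, then $G$ is a closed surface group of negative Euler characteristic ($\chi=2-2g<0$, respectively $\chi=2-k<0$), so Theorem~\ref{surface inert} applied to the singleton $\{\phi\}$ yields inertia.

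I do not anticipate a genuine obstacle; the statement really is a corollary, and the only points requiring care are organizational. One must check that each of the forms (euc1)--(euc4) is literally a direct product of Euclidean-type surface groups, so that Theorem~\ref{main-thm2} is applicable to them, and one must read off the correct equivalence or hypothesis from each of the four cited theorems—in particular specializing the family statements of Theorems~\ref{inj fixed subgp of free gp inert} and~\ref{surface inert} to singletons. There is also a harmless bookkeeping subtlety from degenerate overlaps among the forms (for instance (euc4) with $\ell=1$, $p=0$ is just $NS_2$, the case $q=0$ of (euc2)); these cause no conflict, since in such boundary cases the group already sits inside the allowed list. Once these alignments are in place, both implications close immediately.
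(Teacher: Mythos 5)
Your proposal is correct and follows essentially the same route as the paper, which presents this theorem as an immediate corollary assembled from exactly the ingredients you cite: Theorem~\ref{products inert} plus Theorem~\ref{main-thm2} for the forward implication (eliminating (euc3) and (euc4)), and Theorem~\ref{main-thm2}, Theorem~\ref{inj fixed subgp of free gp inert} (specialized to the singleton $\{\phi\}$), and Theorem~\ref{surface inert} for the converse. Your explicit case-matching and the remark about degenerate overlaps among the Euclidean forms are just careful spellings-out of what the paper leaves implicit.
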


The complete characterization of compression for mixed type of surface groups remains also an open problem, which seems somehow trickier.

\begin{proof}[Proof of Theorem~\ref{main-thm}]
(a)$\Rightarrow$ (b) is trivial.

For (b) $\Rightarrow$ (c), we apply Theorem~\ref{products compressed} and remove the case (euc3) by using the counter-example given in Proposition~\ref{euc3} from Section~3.

Finally, to see that (c) $\Rightarrow$ (a), we first note that, by elementary results about abelian groups, every subgroup of $G$ is inert in $G$ (and hence compressed in $G$) in the case (euc1); the same is true for the case (euc2), as proved in Proposition~\ref{euc2} from Section~2; finally, for the case (euc4), in Section~4 we use a property of fixed points of endomorphisms to show that $\fix \phi$ is compressed in $G$ for any $\phi\in \edo(G)$; see Corollary~\ref{euc4}.
\end{proof}

\begin{proof}[Proof of Theorem~\ref{main-thm2}]
(a)$\Rightarrow$ (b) is trivial.

For (b) $\Rightarrow$ (c), we apply Theorem~\ref{main-thm} and remove the case (euc4) by using the counter-example given in Proposition~\ref{no-comp} from Section~4.

Finally, in abelian groups every subgroup is inert, and the same is true for the case (euc2), as proved in Proposition~\ref{euc2}; this proves (c) $\Rightarrow$ (a).
\end{proof}

\section{The case (euc2)}

The goal of this section is to show that every subgroup of
 $$
G=NS_2\times \Z_2^q =\langle a, b \mid b a b^{-1} a\rangle \times \prod_{j=1}^q \langle d_j \mid d_j^2\rangle,
 $$
where $q\geqslant 0$, is inert (and hence compressed) in $G$. We need the following lemma, which follows from standard arguments.

\begin{lem}\label{dp}
Let $A,B$ be two groups, let $G=A\times B$, let $\pi \colon G\twoheadrightarrow A$ be the natural projection (with $\ker\pi =B$), and let $H\leqslant G$ be a subgroup such that $H\cap B\leqslant Z(H)$. If $\pi_{|H}\colon H\twoheadrightarrow \pi H\leqslant A$ splits then $H\simeq \pi H\times (H\cap B)$.
\end{lem}

\begin{proof}
Let $H\leftarrow \pi H \colon \alpha$ be a morphism such that $\pi \alpha =Id_{\pi H}$, and consider
 $$
\begin{array}{rcl} \Psi \colon H & \to & \pi H\times (H\cap B) \\ h & \mapsto & \Big( \pi h,\, (\alpha\pi h)^{-1}\cdot h\Big). \end{array}
 $$
It is well-defined since $\pi ((\alpha\pi h)^{-1}\cdot h)=(\pi\alpha\pi h)^{-1}\cdot (\pi h)=1$ and so, $(\alpha\pi h)^{-1}\cdot h\in \ker \pi_{|H}=H\cap B$. It is a morphism since
 $$
\begin{array}{rcl}
\Psi(h_1h_2) &=& \big( \pi (h_1h_2),\, (\alpha\pi (h_1h_2))^{-1}\cdot (h_1h_2)\big) \\ &=& \big( (\pi h_1)\cdot (\pi h_2),\, (\alpha\pi h_2)^{-1}\cdot (\alpha\pi h_1)^{-1}\cdot h_1\cdot h_2\big) \\ &=& \big( (\pi h_1)\cdot (\pi h_2),\, (\alpha\pi h_1)^{-1}\cdot h_1\cdot (\alpha\pi h_2)^{-1}\cdot h_2\big) \\ &=& \Psi h_1 \cdot \Psi h_2,
\end{array}
 $$
where the middle step is valid because $(\alpha\pi h_1)^{-1}\cdot h_1\in H\cap B\leqslant Z(H)$, by hypothesis. Finally, it is straightforward to see that $\pi H\times (H\cap B) \to H$, $(x,y)\mapsto (\alpha x)\cdot y$ is a homomorphism and is the inverse of $\Psi$. Hence, $\Psi$ is bijective.
\end{proof}

\begin{prop}\label{euc2}
Every subgroup $H\leqslant G=NS_2\times \mathbb{Z}_2^q$, $q\geqslant 0$, is inert (and hence compressed) in $G$.
\end{prop}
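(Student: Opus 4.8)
The plan is to show that every subgroup of $G=NS_2\times \Z_2^q$ has the particularly rigid form $H\simeq \pi H\times (H\cap B)$, where $B=1\times \Z_2^q$ and $\pi\colon G\twoheadrightarrow NS_2$ is the projection with $\ker\pi=B$; once this splitting is in hand, inertia will fall out of a direct rank comparison. First I would record that $B\leqslant Z(G)$, being an abelian direct factor, so that $H\cap B\leqslant Z(H)$ holds automatically for every $H\leqslant G$. This is exactly the first hypothesis of Lemma~\ref{dp}, so it only remains to verify its second hypothesis, that $\pi_{|H}\colon H\twoheadrightarrow \pi H$ splits. This is the heart of the matter and the step I expect to be the main obstacle.

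To handle the splitting I would first recall that $NS_2$ is the Klein bottle group, which is virtually $\Z^2$, so every one of its subgroups — in particular $\pi H$ — is isomorphic to one of $1$, $\Z$, $\Z^2$, or $NS_2$. When $\pi H$ is free (the cases $1$ and $\Z$) the splitting is automatic. The content lies in the cases $\pi H\simeq \Z^2$ and $\pi H\simeq NS_2$, and here the key observation is that $\Z_2^q$ is $2$-torsion while the relevant relators have even exponent-sum in each generator. Concretely, for $\pi H\simeq NS_2=\langle a,b\mid bab^{-1}a\rangle$ I would choose arbitrary lifts $\tilde a=(a,u),\ \tilde b=(b,v)\in H$ of $a,b$ with $u,v\in \Z_2^q$; computing coordinatewise in the direct product gives
\[
\tilde b\,\tilde a\,\tilde b^{-1}\tilde a=\big(bab^{-1}a,\ v+u-v+u\big)=(1,\,2u)=(1,0),
\]
since $bab^{-1}a=1$ in $NS_2$ and $2u=0$ in $\Z_2^q$. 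Thus $a\mapsto\tilde a,\ b\mapsto\tilde b$ respects the unique defining relator, defining a homomorphism $NS_2\to H$ that splits $\pi_{|H}$. The case $\pi H\simeq \Z^2=\langle x,y\mid [x,y]\rangle$ is identical with the commutator relator, as any lifts satisfy $[\tilde x,\tilde y]=([x,y],0)=1$. Hence $\pi_{|H}$ splits in every case, and Lemma~\ref{dp} gives $H\simeq \pi H\times (H\cap B)$.

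From this structure I would extract the rank formula $\rk(H)=\epsilon(\pi H)+\dim_{\F_2}(H\cap B)$, where $\epsilon(\pi H)\in\{0,1,2,2\}$ according as $\pi H$ is $1,\Z,\Z^2,NS_2$ (read off from the abelianization of $\pi H\times \Z_2^s$). Two monotonicity facts then finish everything: $\epsilon$ is monotone under inclusion of subgroups of $NS_2$ (a rank-$2$ subgroup cannot sit inside a subgroup of smaller $\epsilon$), and $\dim_{\F_2}(\,\cdot\,\cap B)$ is monotone under inclusion in $G$. Given any finitely generated $K\leqslant G$, applying the rank formula to $L=H\cap K$ and using $\pi(H\cap K)\leqslant \pi K$ together with $(H\cap K)\cap B\leqslant K\cap B$ yields
\[
\rk(H\cap K)=\epsilon\big(\pi(H\cap K)\big)+\dim_{\F_2}\big((H\cap K)\cap B\big)\leqslant \epsilon(\pi K)+\dim_{\F_2}(K\cap B)=\rk(K),
\]
which is precisely the inertia of $H$ in $G$; since $K$ was arbitrary and the same argument applies to every $H$, every subgroup is inert.

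In summary, the only genuinely delicate point is the splitting of $\pi_{|H}$, and there the two non-free cases $\Z^2$ and $NS_2$ are resolved uniformly by the even-exponent-sum / $2$-torsion computation above. Everything else — the classification of subgroups of $NS_2$, the rank formula for $\pi H\times \Z_2^s$, and the monotonicity of $\epsilon$ — is standard bookkeeping that I would keep brief.
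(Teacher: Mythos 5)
Your proposal is correct and follows essentially the same route as the paper's own proof: reduce to Lemma~\ref{dp} by splitting $\pi_{|H}$ via the classification of subgroups of $NS_2$ and the $2$-torsion/even-exponent-sum observation (the paper's $e_u^2=1$ computation is exactly your $(1,2u)=(1,0)$), and then conclude inertia from the additive rank formula together with monotonicity of rank for subgroups of $NS_2$ and of $\dim(\,\cdot\,\cap \Z_2^q)$. The only cosmetic difference is that the paper treats $q=0$ separately and applies the rank comparison with $R$ and $H\cap R$ directly rather than isolating your $\epsilon$-function, but the underlying argument is identical.
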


\begin{proof}
First, note that any subgroup of $NS_2$ is isomorphic to either $1,\Z,\Z^2$, or $NS_2$; it is then immediate that any subgroup of $NS_2$ is inert in $NS_2$; this is the case $q=0$.

Suppose $q\geqslant 1$, fix an arbitrary $H\leqslant G=NS_2\times \Z_2^q$, and let us see it satisfies the conditions of Lemma~\ref{dp}. The fact $H\cap \Z_2^q \leqslant Z(H)$ is obvious because $\Z_2^q$ is abelian. To see that the restricted short exact sequence $1\to H\cap \Z_2^q \to H\to \pi H \to 1$ splits, let us distinguish the isomorphism type of $\pi H\leqslant NS_2$: if $\pi H\simeq 1,\, \Z$ then it is free and the sequence clearly splits; if $\pi H\simeq \Z^2,\, NS_2$, take two elements $u,v\in \pi H$ which present $\pi H$, say $\pi H\simeq \langle u,v \mid uv=vu \rangle$ or $\pi H\simeq \langle u,v \mid vuv^{-1}u \rangle$, respectively, and choose arbitrary preimages $ue_u,\, ve_v\in H$, where $e_u,e_v\in \Z_2^q$ and so satisfy $e_u^2=e_v^2=1$. We claim that, in both cases, $\alpha \colon \pi H\to H$, $u\mapsto ue_u$, $v\mapsto ve_v$ splits the sequence. In fact, it is a well-defined morphism because, if $uv=vu$ in $\pi H$ then certainly $ue_u \cdot ve_v=ve_v \cdot ue_u$ in $G$; and if $vuv^{-1}u=1$ in $\pi H$ then certainly $ve_v\cdot ue_u \cdot (ve_v)^{-1}\cdot ue_u = vuv^{-1}u \cdot e_u^2=1$ in $G$. Hence, by Lemma~\ref{dp}, $H\simeq \pi H\times (H\cap \Z_2^q)$.

Now take an arbitrary $R\leqslant G$. By the previous paragraph, we also have $R\simeq \pi R\times (R\cap \Z_2^q)$ and $H\cap R\simeq \pi (H\cap R)\times (H\cap R\cap \Z_2^q)$. Then,
 $$
\rk(H\cap R)=\rk(\pi (H\cap R))+\rk(H\cap R\cap \Z_2^q)\leqslant \rk(\pi R)+\rk(R\cap \Z_2^q)=\rk (R),
 $$
where the inequality $\rk(\pi (H\cap R))\leqslant \rk(\pi R)$ is true because $\pi (H\cap R)\leqslant \pi R\leqslant NS_2$, and subgroups of $NS_2$ are compressed in $NS_2$.
\end{proof}

\begin{rem}\label{remark}
We want to direct reader's attention to the fact that elements in $\Z_2^q$ all have order 2; this played a crucial role in the above argument because, whatever preimage $ue_u\in H$ we choose for $u\in \pi H$ satisfies $e_u^2=1$, and this was necessary to make $\alpha$ well-defined in the case $\pi H\simeq NS_2$. This small detail is important in light of the following example.

We observe that the result stated in Proposition~\ref{euc2} is \emph{not} true if we remove this order two relation: in general, subgroups of $G=NS_2 \times \Z =\langle a, b \mid b a b^{-1} a\rangle \times \langle c\rangle$ are \emph{no longer} inert, neither even compressed in $G$. Consider $R=\langle ac,b\rangle \leqslant G$; it contains $a^{-1}c=b(ac)b^{-1}$ and so, $a^2=(ac)(a^{-1}c)^{-1}$ and $c^2=(ac)(a^{-1}c)$; therefore, $H=\langle a^2, b^2, c^2\rangle =\langle a^2, b^2\rangle \times \langle c^2\rangle\simeq \Z^3$ is contained in $R\leqslant G$ and so, it is not compressed in $G$. (Note that, inserting this $R$ into the argument in Proposition~\ref{euc2}, the restriction of the projection $\pi \colon G\twoheadrightarrow NS_2$ to $R$, say $\pi_{|R}\colon R\twoheadrightarrow \pi R=NS_2$, $ac\mapsto a$, $b\mapsto b$ \emph{does not} split: all elements $r\in R$ satisfy $|r|_c=|r|_a$ (which only makes sense modulo 2) and so all preimages of $a$ in $R$ have odd, and so non-trivial, $c$-exponent; hence, the relation $bab^{-1}a$ can never be preserved up in $R$.)
\end{rem}

This remark tells us that a general approach like we did in the case (euc2) will not work in the case (euc4). So, in Section~4, we will be forced to use specific properties of fixed subgroups of endomorphisms.

\section{The case (\lowercase{euc3})}

In this section, we consider the case (euc3), $G=NS_2\times \Z^p \times \Z_2$ for some $p\geqslant 1$, and exhibit an automorphism $\phi\in \aut(G)$ such that $\fix\phi$ is not compressed (and hence not inert) in $G$.

Put $G=NS_2\times \Z^p \times \Z_2=\langle a, b \mid b a b^{-1} a\rangle\times \prod_{i=1}^p \langle c_i\rangle\times \langle d \mid d^2\rangle$. Since $ba^n =a^{-n}b$ for $n\in \Z$, and $c_i, d\in Z(G)$ for $i=1,\ldots ,p$, we see that each $g\in G$ can be written as $g=a^{k_0}b^{k_1} c_1^{n_1} \cdots c_p^{n_p} d^{k_2}$, for some unique $k_0, k_1, n_1, \ldots ,n_p\in \Z$ and unique $k_2 \in \Z_2$.

Consider $\phi\colon G \to G$ given by $a\mapsto ad$, $b\mapsto ba$, $c_1 \mapsto c_1d$, $c_i \mapsto c_i^{-1}$  $(i=2,\ldots ,p)$, $d\mapsto d$.

\begin{prop}\label{euc3}
$\phi$ is a well-defined automorphism of $G=NS_2\times \Z^p\times \Z_2$, $p\geqslant 1$, and $\fix\phi$ is not compressed in $G$.
\end{prop}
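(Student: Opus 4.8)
The plan is to verify the two claims in Proposition~\ref{euc3} separately: first that $\phi$ is a well-defined automorphism, and then that $\fix\phi$ fails to be compressed by exhibiting an overgroup of smaller rank.

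\textbf{Well-definedness and invertibility of $\phi$.} To show $\phi\in\aut(G)$ I would first check that $\phi$ respects the defining relations of the presentation $G=\langle a,b\mid bab^{-1}a\rangle\times\prod_{i=1}^p\langle c_i\rangle\times\langle d\mid d^2\rangle$. The only nontrivial relation is $bab^{-1}a=1$: computing $\phi(b)\phi(a)\phi(b)^{-1}\phi(a)=(ba)(ad)(ba)^{-1}(ad)$ and using $c_i,d\in Z(G)$ together with $bab^{-1}=a^{-1}$ (equivalently $ba^n=a^{-n}b$) should collapse this to $1$; the factor $d$ contributes $d^2=1$. The generators $c_i$ and $d$ map into the central factors and the images clearly satisfy $d^2=1$ and commute appropriately, so $\phi$ is a well-defined endomorphism. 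For bijectivity, the clean route is to exhibit the inverse directly: since $\phi$ acts ``triangularly'' (each generator maps to itself times central correction terms), I would write down $\psi\colon a\mapsto ad$, $b\mapsto ba^{-1}$, $c_1\mapsto c_1d$, $c_i\mapsto c_i^{-1}$, $d\mapsto d$ (adjusting signs as the computation dictates) and verify $\psi\phi=\phi\psi=\mathrm{Id}$ on generators; alternatively, observe that $\phi$ induces the identity on the abelianization modulo the finite part up to a unipotent-plus-involution matrix, which is invertible over the relevant coefficient rings.

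\textbf{Computing $\fix\phi$ and its rank.} Next I would determine $\fix\phi$ explicitly using the normal form $g=a^{k_0}b^{k_1}c_1^{n_1}\cdots c_p^{n_p}d^{k_2}$. Applying $\phi$ and setting $\phi g=g$ yields a system of equations in the exponents: the conditions $a\mapsto ad$ and $c_1\mapsto c_1d$ force constraints linking $k_0,n_1$ to the $d$-exponent modulo $2$; the condition $c_i\mapsto c_i^{-1}$ for $i\geqslant 2$ forces $n_i=0$; and $b\mapsto ba$ forces a relation on $k_0$ (coming from the $a$-shift). Solving this should reveal that $\fix\phi$ is generated by a small number of elements — I expect the surviving free directions to be $b$-powers, the $d$-factor, and perhaps $a^2$ or a single combination of $a$ and $c_1$ — giving $\fix\phi$ a concrete rank, likely $2$ or $3$.

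\textbf{Defeating compression.} Finally, to show $\fix\phi$ is not compressed I must produce a subgroup $K$ with $\fix\phi\leqslant K\leqslant G$ and $\rk(K)<\rk(\fix\phi)$. The mechanism is exactly the one highlighted in Remark~\ref{remark}: inside $NS_2\times\langle c_1\rangle$ one can build a subgroup $K$ of small rank whose ``squaring'' or commutator relations generate extra $\Z$-directions, absorbing several generators of $\fix\phi$ into fewer generators of $K$. Concretely, I would look for an element combining $a$ and $c_1$ (the generators $\phi$ twists by $d$) together with $b$, exploiting $bab^{-1}=a^{-1}$ to manufacture squares $a^2$, $c_1^2$ inside $\langle\text{few elements}\rangle$, as in the example $R=\langle ac_1,b\rangle$ of the Remark. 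The hard part will be arranging that this low-rank $K$ genuinely contains all of $\fix\phi$ while having strictly smaller rank; this requires pinning down $\fix\phi$ precisely in Step~2 and then checking containment of each of its generators in the chosen $K$ via the squaring identities. Once both the rank of $\fix\phi$ and a strictly smaller-rank overgroup are in hand, non-compression (and hence non-inertia) follows immediately.
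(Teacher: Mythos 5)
Your overall route is the same as the paper's: check the relations and exhibit an explicit inverse to get $\phi\in\aut(G)$, compute $\fix\phi$ via the normal form $a^{k_0}b^{k_1}c_1^{n_1}\cdots c_p^{n_p}d^{k_2}$, and then defeat compression with an overgroup of smaller rank built on the mechanism of Remark~\ref{remark}. However, there is a genuine gap at the heart of the argument: you never actually pin down $\fix\phi$, and your stated expectation for it is wrong in a way that would sink your own Step~3. The computation gives
$\phi g = a^{k_0-\delta(k_1)}b^{k_1}c_1^{n_1}c_2^{-n_2}\cdots c_p^{-n_p}d^{k_0+n_1+k_2}$
(where $\delta(k)$ is the parity of $k$), so $\phi g=g$ forces $k_1$ even, $n_2=\cdots=n_p=0$, and $k_0+n_1$ even; hence
$\fix\phi=\langle a^2,\,b^2,\,ac_1,\,d\rangle=\langle a^2,b^2,ac_1\rangle\times\langle d\rangle\cong \Z^3\times\Z_2$,
which has rank $4$ --- not $2$ or $3$, and it contains \emph{both} $a^2$ and $ac_1$ independently (not ``perhaps $a^2$ or a single combination of $a$ and $c_1$''). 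This is not a cosmetic point: the entire counterexample is engineered (by twisting both $a$ and $c_1$ by the central involution $d$) so that $ac_1$ and $a^2$ are fixed while $a$ and $c_1$ are not, pushing the rank up to $4$. The witnessing overgroup is $K=\langle ac_1,b,d\rangle$, of rank $3$, and the containment $\fix\phi\leqslant K$ comes from the identity $a^2=(ac_1)\,b\,(ac_1)^{-1}b^{-1}$ (exactly the squaring-via-commutator trick you gesture at). If $\fix\phi$ had rank $2$ or $3$ as you predict, containment in a rank-$3$ subgroup would prove nothing, and no smaller-rank overgroup exists; so without the correct determination of $\fix\phi$ your plan does not close.

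Two smaller corrections: your candidate inverse $\psi\colon b\mapsto ba^{-1}$ does not work (it gives $\phi\psi(b)=bd$); the correct inverse sends $b\mapsto ba^{-1}d$, with the other generators as you wrote. Also, the fixed directions in $b$ are only the even powers ($\phi b=ba\neq b$, but $(ba)^2=b^2$ makes $b^2$ fixed), so ``$b$-powers'' should be ``$b^2$-powers''.
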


\begin{proof}
It is straightforward to see that the defining relations from $G$ are preserved and so, $\phi$ is a well-defined endomorphism of $G$. To see it is an automorphism, one just checks that $\psi\colon G \to G$, $a\mapsto ad$, $b\mapsto ba^{-1}d$, $c_1 \mapsto c_1d$, $c_i \mapsto c_i^{-1}$  $(i=2,\ldots ,p)$, $d\mapsto d$ is again well defined and satisfies $\phi\psi=\psi\phi=Id$.

Now let us prove that $\fix\phi=\langle a^2, b^2, ac_1, d\rangle$. The inclusion $\geqslant$ is straightforward. Let $g=a^{k_0}b^{k_1} c_1^{n_1} c_2^{n_2}\cdots c_p^{n_p} d^{k_2}$ be fixed by $\phi$; we have
 $$
a^{k_0}b^{k_1} c_1^{n_1} c_2^{n_2}\cdots c_p^{n_p} d^{k_2} =g=\phi g= a^{k_0-\delta(k_1)}b^{k_1} c_1^{n_1} c_2^{-n_2}\cdots c_p^{-n_p} d^{k_0+n_1+k_2},
 $$
where $\delta(k)$ is 1 if $k$ is odd, and 0 if $k$ is even. So, $n_2=\cdots=n_p=0$, and both $k_1$ and $k_0+n_1$ must be even, therefore $k_0-n_1$ is also even and $g=a^{k_0}b^{k_1} c_1^{n_1} c_2^{n_2}\cdots c_p^{n_p} d^{k_2} =a^{k_0-n_1}b^{k_1} (ac_1)^{n_1} d^{k_2}\in \langle a^2, b^2, ac_1, d\rangle$ (recall that $b^2$ commutes with $a$).

Finally, note that $\fix\phi =\langle a^2, b^2, ac_1\rangle \times \langle d\rangle\cong \Z^3\times \Z_2$ has rank 4 but, $\fix\phi \leqslant \langle ac_1, b, d\rangle$ since $a^2=(ac_1) b (ac_1)^{-1} b^{-1}$. Hence, $\fix\phi$ is not compressed in $G$.
\end{proof}

\section{The case (euc4)}

In this section, we concentrate on case (euc4), i.e.,
 $$
G=NS_2^{\, \ell}\times \Z^p =\prod_{i=1}^\ell \langle a_i, b_i| b_i a_i b_i^{-1} a_i\rangle\times \prod_{j=1}^p \langle c_j\rangle,
 $$
$\ell\geqslant 1$, $p\geqslant 0$. We shall prove that the fixed subgroup of any endomorphism of $G$ is compressed in $G$ (despite the fact that, except for the case $\ell=1$, $p=0$, $G$ contains some non compressed subgroups; see Remark~\ref{remark}). When $\ell+p\geqslant 2$, we shall also exhibit an automorphism $\phi \in \aut(G)$ such that $\fix\phi$ is not inert in $G$.

With this notation, consider $G\unrhd N=\langle a_1,\ldots ,a_\ell\rangle\cong \Z^\ell$, and the commutator subgroup, $G\unrhd G'=\langle a_1^2,\ldots ,a_\ell^2 \rangle\cong \Z^\ell$. Observe that every $g\in G'$ has a unique square root in $G$, which happens to belong to $N$, and we denote it by $\sqrt{g}\in N$: in fact, in each $NS_2$ direct factor of $G$, the square of an element with non-zero total number of $b_i$'s has non-zero total number of $b_i$'s. (In general, however, roots are not unique in $NS_2$, since $(a_i^r b_i)^2=b_i^2$ for every $r\in \Z$.)

\begin{lem}\label{compr}
Let $\varphi \colon G_1 \to G_2$ be a group morphism. If a subgroup $H\leqslant G_1$ satisfies $\rk(\varphi (H))=\rk(H)$, and $\varphi(H)$ is compressed in $Im(\varphi)$ then, $H$ is compressed in $G_1$.
\end{lem}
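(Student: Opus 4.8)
The plan is to verify the definition of compression directly by transporting the relevant subgroup chain forward through $\varphi$ and then combining the given rank equality with the assumed compression of $\varphi(H)$. Concretely, I would fix an arbitrary finitely generated subgroup $K$ with $H\leqslant K\leqslant G_1$ and aim to establish $\rk(H)\leqslant \rk(K)$, which is exactly what compression of $H$ in $G_1$ requires.

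First I would apply $\varphi$ to the inclusions $H\leqslant K\leqslant G_1$ to obtain $\varphi(H)\leqslant \varphi(K)\leqslant \im(\varphi)$. Note that $\varphi(K)$ is finitely generated, being a homomorphic image of the finitely generated group $K$, so that the notion ``compressed in $\im(\varphi)$'' can legitimately be invoked with $\varphi(K)$ as an intermediate subgroup. The proof then reduces to a two-step inequality framed by the hypotheses:
$$
\rk(H) = \rk(\varphi(H)) \leqslant \rk(\varphi(K)) \leqslant \rk(K).
$$
Here the equality is the assumption $\rk(\varphi(H))=\rk(H)$; the first inequality is the compression of $\varphi(H)$ in $\im(\varphi)$ applied to the intermediate subgroup $\varphi(K)$ (since $\varphi(H)\leqslant \varphi(K)\leqslant \im(\varphi)$); and the second inequality holds because $\varphi(K)$ is generated by the $\varphi$-images of any generating set of $K$, so a quotient cannot raise the rank.

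I do not expect any serious obstacle: the statement is essentially a formal consequence of the definitions of rank and compression. The only points demanding mild care are confirming that $\varphi(K)$ is finitely generated (so the hypothesis on $\im(\varphi)$ applies to it) and that the pushed-forward chain genuinely lies inside $\im(\varphi)$ rather than in all of $G_2$; both are immediate. The real significance of the lemma is downstream, in Section~4: to deduce compression of $\fix\phi$ in the case (euc4) one will take $H=\fix\phi$ and choose a convenient morphism $\varphi$ (presumably built from the squaring/root structure on $N$ and $G'$ described just before the lemma), after which the work shifts entirely to verifying the two hypotheses $\rk(\varphi(\fix\phi))=\rk(\fix\phi)$ and compression of $\varphi(\fix\phi)$ in $\im(\varphi)$.
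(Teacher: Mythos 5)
Your proof is correct and is essentially identical to the paper's own argument: push the chain $H\leqslant K\leqslant G_1$ forward to $\varphi(H)\leqslant\varphi(K)\leqslant \mathrm{Im}(\varphi)$ and chain the rank equality with the compression hypothesis to get $\rk(H)=\rk(\varphi(H))\leqslant\rk(\varphi(K))\leqslant\rk(K)$. The only difference is that you make explicit the (immediate) finite generation of $\varphi(K)$, which the paper leaves implicit.
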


\begin{proof}
For every $H\leqslant K\leqslant G_1$, we have $\varphi (H)\leqslant \varphi(K)\leqslant Im(\varphi)\leqslant G_2$; therefore, $\rk(H)=\rk(\varphi(H))\leqslant \rk(\varphi(K))\leqslant \rk(K)$.
\end{proof}

\begin{prop}\label{compressed}
Let $G=NS_2^\ell\times \mathbb{Z}^p$, $\ell\geqslant 1$, $p\geqslant 0$. If a subgroup $H\leqslant G$ satisfies that $\sqrt{h}\in H$ for every $h\in H\cap G'$, then $H$ is compressed in $G$.
\end{prop}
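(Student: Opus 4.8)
### The Structure of the Argument

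The plan is to exploit the map ``take square'' inside each Klein bottle factor so as to transport the compression question from $G$ into a free-abelian group, where everything is inert. The key observation in the preamble is that $G'=\langle a_1^2,\ldots,a_\ell^2\rangle\cong\Z^\ell$ and that every element of $G'$ has a unique square root lying in $N\cong\Z^\ell$. I want to produce a morphism $\varphi$ from $G$ (or from $H$) to an abelian group and then invoke Lemma~\ref{compr}: if I can find $\varphi$ with $\rk(\varphi(H))=\rk(H)$ and $\varphi(H)$ compressed in $\im(\varphi)$, then $H$ is compressed in $G$, and compression in a free-abelian image is automatic because every subgroup of $\Z^m$ is inert. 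So the entire proposition reduces to building the right $\varphi$ and checking the rank equality.

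### Choosing the Morphism and Reducing to the Abelian Case

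First I would set up coordinates: each $g\in G$ is written uniquely as $a_1^{k_1}b_1^{m_1}\cdots a_\ell^{k_\ell}b_\ell^{m_\ell}c_1^{n_1}\cdots c_p^{n_p}$, using the normal form coming from $b_ia_ib_i^{-1}=a_i^{-1}$ and centrality of the $c_j$. The source of non-commutativity is entirely the interaction of $a_i$ and $b_i$ within each factor; modulo $G'=\langle a_i^2\rangle$ the group becomes abelian, since $a_i^2$ is precisely the obstruction to $a_i$ and $b_i$ commuting. Concretely, consider the natural quotient $\varphi\colon G\twoheadrightarrow \bar G:=G/G'$, where $\bar G\cong \Z_2^\ell\times\Z^\ell\times\Z^p$ (the $a_i$ become order-two elements, the $b_i$ and $c_j$ survive as free generators). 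This $\bar G$ is abelian, hence every subgroup is inert and in particular compressed in $\im(\varphi)=\bar G$. Thus by Lemma~\ref{compr} it suffices to prove the rank equality $\rk(\varphi(H))=\rk(H)$ for any $H$ satisfying the square-root hypothesis.

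### The Rank Equality as the Main Obstacle

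The crux, and the step I expect to be hardest, is showing $\rk(\varphi(H))=\rk(H)$ under the hypothesis that $\sqrt{h}\in H$ whenever $h\in H\cap G'$. The inequality $\rk(\varphi(H))\leqslant\rk(H)$ is free, so the content is the reverse: a surjection $H\twoheadrightarrow\varphi(H)$ cannot drop rank. I would analyze $\ker(\varphi_{|H})=H\cap G'$. Since $G'\cong\Z^\ell$ is free-abelian and central-like in the relevant sense, and since the square-root hypothesis forces $H\cap G'$ to be ``saturated'' (if $h\in H\cap G'$ then its square root is already in $H$, and that square root lies in $N$), I expect the short exact sequence $1\to H\cap G'\to H\to\varphi(H)\to 1$ to split, or at least to admit a rank computation via $\rk(H)=\rk(\varphi(H))+\rk(H\cap G')$ counterbalanced by the relations the square roots introduce. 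The delicate point flagged in Remark~\ref{remark} is that without order-two relations (as in $NS_2\times\Z$) a splitting analogous to Lemma~\ref{dp} fails; here the square-root hypothesis is exactly the replacement device that restores a rank-preserving structure. I would therefore carry out the following in order: (1) fix the normal form and the quotient $\varphi$; (2) identify $H\cap G'$ and use $\sqrt{h}\in H$ to pin down generators of $H$ that map to a generating set of $\varphi(H)$ of the same cardinality; (3) verify no collapse occurs, establishing $\rk(\varphi(H))=\rk(H)$; and (4) conclude via Lemma~\ref{compr} together with the inertness of subgroups of the abelian group $\bar G$. The technical heart is step (3), where one must rule out the kind of exponent-coupling ($|r|_c=|r|_a \bmod 2$) that defeats splitting in the cautionary example of Remark~\ref{remark}.
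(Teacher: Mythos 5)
Your overall strategy coincides with the paper's: reduce via Lemma~\ref{compr} to the abelianization $\rho\colon G\to G/G'$, use that every subgroup of the finitely generated abelian group $G/G'\simeq \Z_2^{\ell}\times \Z^{\ell+p}$ is compressed, and thereby boil everything down to the single equality $\rk(\rho(H))=\rk(H)$. But that equality --- which you yourself identify as ``the technical heart'' --- is never actually proved in your proposal: steps (2) and (3) of your plan (``pin down generators of $H$ that map to a generating set of $\varphi(H)$ of the same cardinality'', ``verify no collapse occurs'') merely restate the goal. This is a genuine gap, not a routine verification, and it is precisely where the square-root hypothesis has to do real work.

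Moreover, the two concrete mechanisms you float for closing the gap are both false. Take $H=\langle a_1\rangle$, which satisfies the hypothesis (here $H\cap G'=\langle a_1^2\rangle$ and $\sqrt{a_1^{2n}}=a_1^n\in H$). The sequence $1\to H\cap G'\to H\to \rho(H)\to 1$ is $1\to \Z\to \Z\to \Z_2\to 1$, which does \emph{not} split; and $\rk(H)=1$ while $\rk(\rho(H))+\rk(H\cap G')=1+1=2$, so the additive rank formula fails too. The correct picture is that the rank of the kernel is \emph{absorbed} into the torsion of the image, and the paper proves this by bringing in the intermediate subgroup $N=\langle a_1,\ldots,a_\ell\rangle$ (which your write-up mentions only in passing): the hypothesis makes squaring an isomorphism from $H\cap N$ onto $H\cap G'$, so a basis $\{h_1,\ldots,h_k\}$ of $H\cap G'$ lifts to a basis $\{\sqrt{h_1},\ldots,\sqrt{h_k}\}$ of $H\cap N$, giving $(H\cap N)/(H\cap G')\simeq \Z_2^k$. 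Writing $H/(H\cap N)\simeq \Z^r$ (torsion-free, being a subgroup of $G/N\simeq \Z^{\ell+p}$) and $H/(H\cap G')\simeq \Z_2^s\times \Z^t$, the third isomorphism theorem gives $\Z^r\simeq (\Z_2^s\times \Z^t)/\Z_2^k$; torsion-freeness of this quotient forces $s=k$ and then $t=r$, whence $\rk(\rho(H))=k+r=\rk(H\cap N)+\rk(H/(H\cap N))\geqslant \rk(H)\geqslant \rk(\rho(H))$. Without an argument of this kind --- in particular, without using $N$ and the comparison of the three quotients --- your proposal does not establish the proposition, and the specific shortcuts it suggests cannot be repaired as stated.
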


\begin{proof}
Let $\{h_1,\ldots ,h_k\}$ be a free-abelian basis of $H\cap G'\simeq \Z^k$, $k\leqslant \ell$. From the hypothesis on $H$, it follows that $\{\sqrt{h_1}, \ldots ,\sqrt{h_k}\}$ is a free-abelian basis of $H\cap N\simeq \Z^k$. Therefore, $(H\cap N)/(H\cap G') \simeq \Z_2^k$.

On the other hand $H/(H\cap N)\leqslant G/N\simeq \Z^{\ell+p}$, so $H/(H\cap N)\simeq \Z^r$ for some $r\leqslant \ell+p$; also, $H/(H\cap G')\leqslant G/G'\simeq \Z_2^{\ell}\times \Z^{\ell+p}$, so $H/(H\cap G')\simeq \Z_2^{s}\times \Z^{t}$ for some $s\leqslant \ell$ and $t\leqslant \ell+p$. But
 $$
\Z^r\simeq H/(H\cap N)\simeq \frac{H/(H\cap G')}{(H\cap N)/(H\cap G')} \simeq \frac{\Z_2^s\times \Z^t}{\Z_2^k}
 $$
so, $s=k$ and $t=r$. Then, for the abelianization map $\rho \colon G\to G/G'$, we have $\rk (\rho(H))=\rk(H/(H\cap G'))=s+t=k+r=\rk(H\cap N)+\rk(H/(H\cap N))\geqslant \rk(H)$ and hence, $\rk(\rho(H))=\rk(H)$. By Lemma~\ref{compr}, $H$ is compressed in $G$.
\end{proof}

\begin{cor}\label{euc4}
Let $G=NS_2^\ell\times \mathbb{Z}^p$, $\ell\geqslant 1$, $p\geqslant 0$. For every $\phi\in \edo(G)$, $\fix\phi$ is compressed in $G$.
\end{cor}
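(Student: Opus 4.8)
The goal is to deduce Corollary~\ref{euc4} from Proposition~\ref{compressed}. By that proposition, it suffices to show that $H=\fix\phi$ satisfies the hypothesis that $\sqrt{h}\in H$ for every $h\in H\cap G'$; equivalently, that the unique square root (in $N$) of every fixed element of $G'$ is again fixed by $\phi$. So the plan is to verify that $\fix\phi$ is closed under taking these square roots, and then invoke Proposition~\ref{compressed} directly.

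The key observation I would use is uniqueness of the square root combined with the fact that $\phi$ is a homomorphism. Fix $h\in \fix\phi \cap G'$. Then $\sqrt{h}\in N\leqslant G$ is the unique element of $G$ whose square is $h$ (as recorded in the paragraph preceding Lemma~\ref{compr}: elements of $G'$ have a \emph{unique} square root in $G$, lying in $N$). Now apply $\phi$: since $\phi$ is a morphism, $\big(\phi(\sqrt{h})\big)^2 = \phi\big((\sqrt{h})^2\big) = \phi(h) = h$. Thus $\phi(\sqrt{h})$ is also a square root of $h$ in $G$. By uniqueness of the square root of $h\in G'$, we must have $\phi(\sqrt{h}) = \sqrt{h}$, i.e.\ $\sqrt{h}\in \fix\phi$. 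This shows $\fix\phi$ satisfies the hypothesis of Proposition~\ref{compressed}, so $\fix\phi$ is compressed in $G$.

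The one point that needs a moment of care is the logical chain behind uniqueness: I must make sure that ``unique square root in $G$'' (not merely ``in $N$'') is what is available, since $\phi(\sqrt{h})$ a priori lives anywhere in $G$, not necessarily in $N$. The excerpt's remark is phrased as ``every $g\in G'$ has a unique square root in $G$, which happens to belong to $N$,'' so uniqueness is indeed asserted within all of $G$, and this is exactly what the argument requires; the parenthetical caution that roots are \emph{not} unique in $NS_2$ in general (e.g.\ $(a_i^r b_i)^2 = b_i^2$) does not apply here precisely because $h\in G'$ forces its square root to have zero total $b_i$-content in each factor. I would spell out this uniqueness claim cleanly before using it, perhaps as a one-line justification that in each $NS_2$ factor an element of $\langle a_i^2\rangle$ has $a_i^{k}$ as its only square root in that factor.

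I do not anticipate a genuine obstacle here: the corollary is a short deduction, and the real content is already packaged in Proposition~\ref{compressed}. The entire proof reduces to the sentence ``$\fix\phi$ is closed under $\sqrt{\ \cdot\ }$ on $\fix\phi\cap G'$ because $\phi$ is a morphism and square roots of $G'$-elements are unique, hence Proposition~\ref{compressed} applies.'' If anything is delicate, it is only ensuring the uniqueness statement is invoked over $G$ rather than over $N$, as noted above.
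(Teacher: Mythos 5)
Your proof is correct and follows exactly the paper's argument: invoke Proposition~\ref{compressed}, then use the uniqueness of the square root of an element of $G'$ in all of $G$ (together with $\phi$ being a homomorphism, so $(\phi(\sqrt{h}))^2=\phi(h)=h$) to conclude $\phi(\sqrt{h})=\sqrt{h}$. Your extra care about uniqueness holding in $G$ rather than merely in $N$ is precisely the point the paper's observation is designed to provide, so nothing is missing.
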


\begin{proof}
By Proposition~\ref{compressed}, it suffices to show that $\sqrt{h}\in \fix\phi$, for every $h\in \fix\phi \cap G'$. And this follows from the above observation that $\sqrt{h}$ is the unique element in $G$ with square equal to $h$ (hence, if $h$ is fixed by $\phi$, $\sqrt{h}$ must also be fixed by $\phi$).
\end{proof}

Finally, in this case (euc4), we give examples of automorphisms $\phi \in \aut(G)$ such that $\fix\phi$ is not inert in $G$ (except, of course, for the case $\ell=1$, $p=0$, which is included in (euc2)).

\begin{prop}\label{no-comp}
Let $G=NS_2^{\ell}\times \Z^p$, $\ell\geqslant 1$, $\ell +p\geqslant 2$. There exists an automorphism $\phi \in \aut(G)$ such that  $\fix\phi$ is not inert in $G$.
\end{prop}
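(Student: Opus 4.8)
The plan is to construct, for $G=NS_2^{\ell}\times\Z^p$ with $\ell+p\geqslant 2$, an explicit automorphism $\phi$ whose fixed subgroup $\fix\phi$ intersects some finitely generated $K\leqslant G$ in a subgroup of rank strictly exceeding $\rk(K)$, thereby violating inertia. The guiding intuition comes from Remark~\ref{remark} and Proposition~\ref{euc3}: the quadratic blow-up phenomenon $a^2=(ac_1)b(ac_1)^{-1}b^{-1}$ (where a rank-$3$ abelian group sits inside a rank-$2$ group) is exactly what breaks compression, and the same mechanism, transplanted into an intersection, should break inertia. I would therefore engineer $\phi$ so that $\fix\phi$ contains several commuting ``square'' elements like $a_i^2$, and arrange a test subgroup $K$ that captures them with fewer generators.

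First I would treat the cleanest subcase, $\ell\geqslant 1$ and $p\geqslant 1$ (so $G\supseteq NS_2\times\Z=\langle a,b\mid bab^{-1}a\rangle\times\langle c\rangle$), and define $\phi$ to act nontrivially in a way that forces $\fix\phi$ to contain $a^2$, $b^2$, and $c^2$ while the enveloping test subgroup $R=\langle ac,b\rangle$ has rank $2$. Concretely, mimicking Remark~\ref{remark}, one checks that $H=\langle a^2,b^2,c^2\rangle\cong\Z^3$ lies in $R=\langle ac,b\rangle$ (using $a^{-1}c=b(ac)b^{-1}$, hence $a^2=(ac)(a^{-1}c)^{-1}$ and $c^2=(ac)(a^{-1}c)$). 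So if I can build $\phi$ with $\fix\phi\cap R=H$ of rank $3$ while $\rk(R)=2$, inertia fails. The task is to choose $\phi$ fixing $a^2,b^2,c^2$ (and suitable further generators, to keep $\phi$ an automorphism) but \emph{not} fixing $ac$ or $a^{-1}c$, so that $\fix\phi\cap R$ does not exceed $H$. For the remaining subcase $\ell\geqslant 2$, $p=0$, the same blow-up can be produced inside two Klein-bottle factors $NS_2\times NS_2$, replacing the central $\Z$-generator $c$ by $a_2$ (so that $\fix\phi$ contains $a_1^2$, $b_1^2$, $a_2^2$ sitting in a rank-$2$ subgroup $\langle a_1a_2,b_1\rangle$), which covers all cases with $\ell+p\geqslant 2$.

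The concrete steps, in order, would be: (1) write down $\phi$ explicitly on the generators $a_i,b_i,c_j$ and verify it preserves the defining relations $b_ia_ib_i^{-1}a_i$, so it is a well-defined endomorphism; (2) exhibit an explicit inverse $\psi$ (as in Proposition~\ref{euc3}) and check $\phi\psi=\psi\phi=Id$, certifying $\phi\in\aut(G)$; (3) compute $\fix\phi$ exactly, using the normal-form $g=a^{k_0}b^{k_1}c_1^{n_1}\cdots c_p^{n_p}$ together with the commutation rule $b_ia_i^n=a_i^{-n}b_i$, by writing out $g=\phi g$ and reading off parity and sign constraints on the exponents; (4) identify the test subgroup $K$ (namely $R$ above) of rank $2$ and verify $\rk(\fix\phi\cap K)=3>2=\rk(K)$. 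The main obstacle is step~(3) combined with the tension inherent in step~(1)–(2): I must simultaneously force $a^2,b^2,c^2\in\fix\phi$ yet prevent the \emph{individual} elements $ac,a^{-1}c$ from being fixed (otherwise $\fix\phi\cap R$ would drop to rank $2$ and the argument collapses), all while keeping $\phi$ bijective. Getting a $\phi$ that threads this needle, and carefully verifying that $\fix\phi\cap K$ is exactly the rank-$3$ group $H$ and not accidentally larger or smaller, is where the real care is needed; the automorphism and normal-form bookkeeping are routine once the right $\phi$ is found.
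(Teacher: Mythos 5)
Your plan has a genuine, fatal gap: the automorphism you propose to construct does not exist. In $G=NS_2\times\Z=\langle a,b\mid bab^{-1}a\rangle\times\langle c\rangle$, the elements $a^2$ and $c^2$ each have a \emph{unique} square root in $G$, namely $a$ and $c$: writing $g=a^{k_0}b^{k_1}c^{n}$ in normal form, one computes $g^2=a^{2k_0}b^{2k_1}c^{2n}$ when $k_1$ is even, and $g^2=b^{2k_1}c^{2n}$ when $k_1$ is odd, so $g^2=a^2$ forces $g=a$, and $g^2=c^2$ forces $g=c$. Hence \emph{any} endomorphism fixing $a^2$ and $c^2$ must fix $a$ and $c$, and therefore fixes $ac$ and $a^{-1}c$ as well. (This unique-square-root phenomenon is exactly what the paper uses in Section~4, Proposition~\ref{compressed} and Corollary~\ref{euc4}, to prove that fixed subgroups \emph{are} compressed in case (euc4).) So your requirement ``$\phi$ fixes $a^2,b^2,c^2$ but not $ac$ or $a^{-1}c$'' is self-contradictory; the same objection kills the subcase $\ell\geqslant 2$, $p=0$, where fixing $a_1^2$ and $a_2^2$ forces fixing $a_1$, $a_2$, hence $a_1a_2$. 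The needle you set out to thread has no eye.

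The misconception that led you there is the claim that if $ac$ were fixed then $\fix\phi\cap R$ ``would drop to rank $2$.'' That is false, and the paper's actual proof runs precisely through the situation you try to avoid: take $\phi_1\colon a\mapsto a$, $b\mapsto ba$, $c\mapsto c$, so that $\fix\phi_1=\langle a,b^2,c\rangle\cong\Z^3$, which contains $a$, $c$, and $ac$ (but crucially not $b$, only $b^2$). Then $\fix\phi_1\cap R\geqslant\langle ac,a^2,b^2\rangle$, which is an index-$2$ subgroup of $\langle a,b^2,c\rangle$ and hence free abelian of rank $3$; and since $\fix\phi_1\cap R$ is also a subgroup of the free abelian group $\fix\phi_1\cong\Z^3$, inside which rank \emph{is} monotone under inclusion, we get $\rk(\fix\phi_1\cap R)=3>2=\rk(R)$. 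The rank of the intersection only collapses if $b$ itself is fixed (then $\fix\phi\cap R=R$); so the element you must exclude from $\fix\phi$ is $b$, not $ac$. The correct construction is thus dual to yours: fix $a$ and $c$ outright, twist only $b\mapsto ba$ (and, in the case $\ell\geqslant2$, $p=0$, send $b_2\mapsto b_2^{-1}$ so that $a_2$ but not $b_2$ is fixed), and then conclude via monotonicity of rank inside the abelian group $\fix\phi$ --- an argument your outline does not contain and which is the real content of the proof.
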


\begin{proof}
The (necessary) condition $\ell+p\geqslant 2$ (equivalently, $p\geqslant 1$ or $\ell\geqslant 2$) just means we have at least two direct factors in $G$ (necessary in order to avoid the case $G=NS_2$, for which we already know that every subgroup is inert).

Suppose $\ell=p=1$, i.e., $G=NS_2\times \Z=\langle a,b \mid bab^{-1}a\rangle \times \langle c\rangle$. Consider $\phi_1\colon G\to G$, $a\mapsto a$, $b\mapsto ba$, $c\mapsto c$; straightforward calculations show that it is a well-defined automorphism $\phi_1\in \aut(G)$, and that $\fix\phi_1=\langle a, b^2, c\rangle\cong \Z^3$. But this is not inert in $G$ because $\fix\phi_1 \cap\langle ac, b\rangle= \langle ac, a^2, b^2\rangle\cong \Z^3$ (the inclusion $\geqslant$ follows from direct inspection, and $\leqslant$ is a consequence of $\langle ac, a^2, b^2\rangle$ being an index 2 subgroup of $\langle a, b^2, c\rangle$).

Now, suppose $\ell=2$, $p=0$, i.e., $G=(NS_2)^2=\langle a_1,b_1 \mid b_1a_1b_1^{-1}a_1\rangle \times \langle a_2,b_2 \mid b_2a_2b_2^{-1}a_2\rangle$. Consider $\phi_2\colon G\to G$, $a_1\mapsto a_1$, $b_1\mapsto b_1a_1$, $a_2\mapsto a_2$, $b_2\mapsto b_2^{-1}$; again, straightforward calculations show that it is a well-defined automorphism $\phi_2\in \aut(G)$, and that $\fix\phi_2=\langle a_1, b_1^2, a_2\rangle\cong \Z^3$. By the same argument as in the previous paragraph, this is not inert in $G$.

The general case follows easily by taking the identity in all the subsequent direct factors behind the first two.
\end{proof}

\begin{rem}
The concepts of compression and inertia are quite delicate and sensible to the ambient group. In the last sentence of the previous proof, we used the straightforward fact that, for $\phi\in \aut(G_1)$, if $\fix(\phi)$ is not inert in $G_1$ then $\fix(\phi\times id)$ is not inert in $G_1\times G_2$. However, the converse is not true: we proved that $\langle a, b^2\rangle\times \langle c\rangle$ is \emph{not} inert in $NS_2\times \Z$, while $\langle a, b^2\rangle$ \emph{is} inert in $NS_2$, by Proposition~\ref{euc2}.
\end{rem}

\noindent\textbf{Acknowledgements.} The first named author is partially supported by the National Natural Science Foundation of China (No. 11571246); the second by the Spanish Agencia Estatal de Investigaci\'on, through grant MTM2017-82740-P (AEI/ FEDER, UE), and  also by the ``Mar\'{\i}a de Maeztu'' Programme for Units of Excellence in R\&D (MDM-2014-0445); finally, the third named author is partially supported by the National Natural Science Foundation of China (No. 11771345).

\end{document}